\theoremstyle{plain}
\newtheorem{theorem}{Theorem}[section]
\newtheorem{proposition}[theorem]{Proposition}
\newtheorem{corollary}[theorem]{Corollary}
\newtheorem{lemma}[theorem]{Lemma}
\newtheorem{definition}[theorem]{Definition}
\newtheorem*{observation*}{Observation}
\newtheorem*{Question*}{Question}
\newtheorem*{theorem*}{Theorem}
\DeclareMathOperator{\C}{\mathcal{C}}
\DeclareMathOperator{\F}{\mathcal{F}}
\DeclareMathOperator{\stab}{Stab}
\DeclareMathOperator{\vf}{vf}
\DeclareMathOperator{\fe}{fe}
\DeclareMathOperator{\Sym}{Sym}
\DeclareMathOperator{\Aut}{Aut}
\providecommand{\keywords}[1]{%
  \small	
  \textbf{Keywords:} #1
}
\providecommand{\MSC}[1]{%
  \small	
  \textbf{Mathematics Subject Classification (2020):} #1
}
\newcommand{\define}[1]{\textbf{\emph{#1}}\index{#1}}
\edef\texforht{TT\noexpand\fi
  \@ifpackageloaded{tex4ht}
    {\noexpand\iftrue}
    {\noexpand\iffalse}}
\newif\iftikz@node@phantom
\tikzset{
  phantom/.is if=tikz@node@phantom,
  text/.code=%
    \edef\tikz@temp{#1}%
    \ifx\tikz@temp\tikz@nonetext
      \tikz@node@phantomtrue
    \else
      \tikz@node@phantomfalse
      \let\tikz@textcolor\tikz@temp
    \fi
}
\patchcmd\tikz@fig@continue{\tikz@node@transformations}{%
  \iftikz@node@phantom
    \setbox\pgfnodeparttextbox\hbox{}
  \fi\tikz@node@transformations}{}{}
\newcommand{\tikzAngleOfLine}{\tikz@AngleOfLine}
\def\tikz@AngleOfLine(#1)(#2)#3{%
  \pgfmathanglebetweenpoints{%
    \pgfpointanchor{#1}{center}}{%
    \pgfpointanchor{#2}{center}}
  \pgfmathsetmacro{#3}{\pgfmathresult}%
}
\tikzset{ 
    vertexNodePlain/.style = {fill=#1, shape=circle, inner sep=0pt, minimum size=2pt, text=none},
    vertexNodePlain/.default=gray,
    vertexPlain/labels/.style = {
        vertexNode/.style={vertexNodePlain=##1},
        vertexLabel/.style={gray}
    },
    vertexPlain/nolabels/.style = {
        vertexNode/.style={vertexNodePlain=##1},
        vertexLabel/.style={text=none}
    },
    vertexPlain/.style = vertexPlain/#1,
    vertexPlain/.default=labels
}
\tikzset{
    vertexNodeNormal/.style = {fill=#1, shape=circle, inner sep=0pt, minimum size=4pt, text=none},
    vertexNodeNormal/.default = blue,
    vertexNormal/labels/.style = {
        vertexNode/.style={vertexNodeNormal=##1},
        vertexLabel/.style={blue}
    },
    vertexNormal/nolabels/.style = {
        vertexNode/.style={vertexNodeNormal=##1},
        vertexLabel/.style={text=none}
    },
    vertexNormal/.style = vertexNormal/#1,
    vertexNormal/.default=labels
}
\tikzset{
    vertexNodeBallShading/pdf/.style = {ball color=#1},
    vertexNodeBallShading/svg/.style = {fill=#1},
    vertexNodeBallShading/.code = {
        \if\texforht
            \tikzset{vertexNodeBallShading/svg=#1!90!black}
        \else
            \tikzset{vertexNodeBallShading/pdf=#1}
        \fi
    },
    vertexNodeBall/.style = {shape=circle, vertexNodeBallShading=#1, inner sep=2pt, outer sep=0pt, minimum size=3pt, font=\tiny},
    vertexNodeBall/.default = white,
    vertexBall/labels/.style = {
        vertexNode/.style={vertexNodeBall=##1, text=black},
        vertexLabel/.style={text=none}
    },
    vertexBall/nolabels/.style = {
        vertexNode/.style={vertexNodeBall=##1, text=none},
        vertexLabel/.style={text=none}
    },
    vertexBall/.style = vertexBall/#1,
    vertexBall/.default=labels
}
\tikzset{ 
    vertexStyle/.style={vertexNormal=#1},
    vertexStyle/.default = labels
}
\newcommand{\vertexLabelR}[4][]{
    \ifthenelse{ \equal{#1}{} }
        { \node[vertexNode] at (#2) {#4}; }
        { \node[vertexNode=#1] at (#2) {#4}; }
    \node[vertexLabel, #3] at (#2) {#4};
}
\newcommand{\vertexLabelA}[4][]{
    \ifthenelse{ \equal{#1}{} }
        { \node[vertexNode] at (#2) {#4}; }
        { \node[vertexNode=#1] at (#2) {#4}; }
    \node[vertexLabel] at (#3) {#4};
}
\newcommand{\edgeLabelColor}{blue!20!white}
\tikzset{
    edgeLineNone/.style = {draw=none},
    edgeLineNone/.default=black,
    edgeNone/labels/.style = {
        edge/.style = {edgeLineNone=##1},
        edgeLabel/.style = {fill=\edgeLabelColor,font=\small}
    },
    edgeNone/nolabels/.style = {
        edge/.style = {edgeLineNone=##1},
        edgeLabel/.style = {text=none}
    },
    edgeNone/.style = edgeNone/#1,
    edgeNone/.default = labels
}
\tikzset{
    edgeLinePlain/.style={line join=round, draw=#1},
    edgeLinePlain/.default=black,
    edgePlain/labels/.style = {
        edge/.style={edgeLinePlain=##1},
        edgeLabel/.style={fill=\edgeLabelColor,font=\small}
    },
    edgePlain/nolabels/.style = {
        edge/.style={edgeLinePlain=##1},
        edgeLabel/.style={text=none}
    },
    edgePlain/.style = edgePlain/#1,
    edgePlain/.default = labels
}
\tikzset{
    edgeLineDouble/.style = {very thin, double=#1, double distance=.8pt, line join=round},
    edgeLineDouble/.default=gray!90!white,
    edgeDouble/labels/.style = {
        edge/.style = {edgeLineDouble=##1},
        edgeLabel/.style = {fill=\edgeLabelColor,font=\small}
    },
    edgeDouble/nolabels/.style = {
        edge/.style = {edgeLineDouble=##1},
        edgeLabel/.style = {text=none}
    },
    edgeDouble/.style = edgeDouble/#1,
    edgeDouble/.default = labels
}
\tikzset{
    edgeStyle/.style = {edgePlain=#1},
    edgeStyle/.default = labels
}
\newcommand{\faceColorY}{yellow!60!white}   
\newcommand{\faceColorB}{blue!60!white}     
\newcommand{\faceColorC}{cyan!60}           
\newcommand{\faceColorR}{red!60!white}      
\newcommand{\faceColorG}{green!60!white}    
\newcommand{\faceColorO}{orange!50!yellow!70!white} 
\newcommand{\faceColor}{\faceColorY}
\newcommand{\faceColorSwap}{\faceColorC}
\tikzset{
    face/.style = {fill=#1},
    face/.default = \faceColor,
    faceY/.style = {face=\faceColorY},
    faceB/.style = {face=\faceColorB},
    faceC/.style = {face=\faceColorC},
    faceR/.style = {face=\faceColorR},
    faceG/.style = {face=\faceColorG},
    faceO/.style = {face=\faceColorO}
}
\tikzset{
    faceStyle/labels/.style = {
        faceLabel/.style = {}
    },
    faceStyle/nolabels/.style = {
        faceLabel/.style = {text=none}
    },
    faceStyle/.style = faceStyle/#1,
    faceStyle/.default = labels
}
\tikzset{ face/.style={fill=#1} }
\tikzset{ faceSwap/.code=
    \ifdefined\swapColors
        \tikzset{face=\faceColorSwap}
    \else
        \tikzset{face=\faceColor}
    \fi
}
\title{A Census of Edge-transitive Surfaces}
\author{Reymond Akpanya\thanks{School of Mathematics and Statistics, The University of Sydney, Carslaw Building F07,
Camperdown NSW 2006, Australia. E-mail: {\tt reymond.akpanya@sydney.edu.au}.}}
\date{}
\begin{document}

\maketitle

\abstract{ 
 In this paper, we study edge-transitive surfaces, i.e.\  triangulated $2$-dimensional manifolds whose automorphism groups act transitively on the edges of these triangulated surfaces. We show that there exist four types of edge-transitive surfaces, splitting up further into a total of five sub-types.  We exploit our theoretical results to compute a census of edge-transitive surfaces with up to $5000$ faces by constructing suitable cycle double covers of edge-transitive cubic graphs.}

\vspace{0.25cm}
\keywords{Edge-transitive cubic graphs, Surface triangulations, Symmetric graph embeddings, Polyhedral Maps, Edge-transitive surfaces }

\vspace{0.25cm}
\MSC{05E18, 20B25, 05C75 }
\section{Introduction}
One of the main interests of topological graph theory is embedding graphs on surfaces such that the resulting maps visualise prescribed properties of the underlying graphs.
For instance, if a graph $\Gamma$ is highly symmetric, a desirable goal is to embed $\Gamma$ on a surface $S$ via an embedding $\phi:\Gamma\to S$ such that the automorphism group of the map $\phi(\Gamma)$ is as large as possible.
For example, if $\Gamma$ is a $3$-connected planar graph, $\Aut(\Gamma)=\Aut(\phi(\Gamma))$, where $\phi$ denotes the (unique) planar embedding of $\Gamma,$ see \cite{Whitney}. For more studies on graph embeddings, we refer the reader to \cite{tutte_embedding, NPcomplete, richter,MR1133814}.

Here, we investigate strong embeddings (or equivalently cycle double covers) of cubic graphs, i.e.\ embeddings of a cubic graph $\Gamma$ where the faces of a corresponding map are bounded by cycles in $\Gamma$ (see \cite{TopologicalGraphTheory,GraphsOnSurfaces} for more details).  In general, the existence of a CDC for a given cubic graph is still an open problem, formulated in the \emph{cycle double cover conjecture} \cite{szekeres,seymour}.
For surveys related to studies on CDCs of cubic graphs, we refer the reader to \cite{JAEGER19851, zhang}.
If a cubic graph admits a cycle double cover (CDC), then it can be associated to a \emph{simplicial surface} $X$, see \cite{automorphism}. 
Loosely speaking, a simplicial surface is a combinatorial structure that encodes the incidence relations between the vertices, edges and faces of a triangulated 2-manifold. 
Equivalently, a simplicial surface can be interpreted as a map on a surface, where the resulting faces are triangles.
In the case that $\Gamma$ gives rise to a simplicial surface, $\Gamma$ can be viewed as the graph that describes the face-edge incidences of $X$.
We observe that if $\Gamma$ is a cubic graph and $X$ a corresponding simplicial surface forming a regular map (e.g. see \cite{CONDER2001224}), then $\Aut(\Gamma)$ is a group acting transitively on the vertices and edges of $\Gamma$ that can be exploited to construct the CDC corresponding to $X$ as the orbit of a single cycle in $\Gamma$. As a result, $\Aut(X)$ acts transitively on the vertices, edges and faces of $X.$

We aim to construct simplicial surfaces that are slightly less symmetric, namely edge-transitive surfaces.
Here, we call such a surface $X$ edge-transitive, if (1) $\Aut(X)$ acts transitively on the edges of $X$ and (2) the vertex-edge incidences of $X$ yield a simple graph. 
 We aim to achieve this construction by computing suitable CDCs of cubic graphs.
Since it is not clear how to compute a CDC of an arbitrary cubic graph, the main question answered in this work can be summarised as follows:
\begin{Question*}
    How can we efficiently compute CDCs of a cubic graph such that the resulting simplicial surfaces are edge-transitive? 
\end{Question*}
It is easy to see that the cubic graph corresponding to an edge-transitive surface has to be edge-transitive. Hence, we exploit the census of edge-transitive cubic graphs established in \cite{GraphSym,edgetransitiveconder} to conduct our research. In  \cite{akpanya2025censusfacetransitivesurfaces}, the vertex-transitive cubic graphs constructed in \cite{GraphSym,cubicvertextransitive} have been utilised to construct all face-transitive surfaces with up to $1280$ faces.
Inspired by the methods established in the mentioned work, we prove the following theorems.
\begin{theorem*}
    There exist exactly $4$ types of edge-transitive surfaces, splitting up into a total of $5$ subtypes.
\end{theorem*}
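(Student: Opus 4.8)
The plan is to localise the whole problem to the star of a single edge, read off the finitely many possibilities for the edge-stabiliser, and then — as observed in the introduction, where the underlying cubic graph of an edge-transitive surface is noted to be edge-transitive — exhibit one explicit edge-transitive surface realising each possibility as a cycle double cover of an edge-transitive cubic graph drawn from the census of \cite{GraphSym,edgetransitiveconder}.

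First, the local analysis. In a simplicial surface $X$ the flags — mutually incident vertex--edge--face triples — incident to a fixed edge $e$ are exactly four: $e$ has two endpoints $u,v$, lies on exactly two triangles $f_1,f_2$, and the flags are $(u,e,f_1),(u,e,f_2),(v,e,f_1),(v,e,f_2)$. An automorphism fixing $e$ setwise permutes $\{u,v\}$ and permutes $\{f_1,f_2\}$, acting on the four flags through the resulting product action, so there is a homomorphism $\Aut(X)_e\to\Sym(\{u,v\})\times\Sym(\{f_1,f_2\})\cong C_2\times C_2$. Its kernel fixes $e$, both endpoints and both incident triangles, hence fixes the triangle $f_1$ pointwise; since $X$ is connected this forces the identity. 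Therefore $\Aut(X)_e$ embeds into $C_2\times C_2$, and because $\Aut(X)$ is edge-transitive we get $|\Aut(X)|=|E(X)|\,|\Aut(X)_e|$ and the flags of $X$ fall into exactly $4/|\Aut(X)_e|\in\{1,2,4\}$ orbits.

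Next I would run through the five subgroups of $C_2\times C_2$. Let $a$ be the involution swapping the two endpoints of $e$ and $b$ the one swapping the two triangles at $e$; the candidates for $\Aut(X)_e$ are $1$, $\langle a\rangle$, $\langle b\rangle$, $\langle ab\rangle$, $C_2\times C_2$. Using edge-transitivity, connectedness, and the hypothesis that the $1$-skeleton is simple (so that an edge is determined by its endpoints and two distinct triangles meet in at most an edge), I would show that $\Aut(X)$ is vertex-transitive exactly when $\Aut(X)_e$ contains an endpoint-swapping element (meets $\{a,ab\}$) and face-transitive exactly when it contains a triangle-swapping element (meets $\{b,ab\}$). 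Sorting the five subgroups by the pair (vertex-transitive?, face-transitive?) yields four classes — $\{C_2\times C_2,\langle ab\rangle\}$ (both), $\{\langle a\rangle\}$ (vertex- but not face-transitive), $\{\langle b\rangle\}$ (face- but not vertex-transitive), $\{1\}$ (neither) — that is, four \emph{types}, the first of which refines into the reflexible case $\Aut(X)_e=C_2\times C_2$ (one flag orbit) and the non-reflexible case $\Aut(X)_e=\langle ab\rangle$ (two flag orbits), for five \emph{subtypes} in total. This gives the ``at most four types / five subtypes'' half of the statement, together with a structural description of each. For the existence half I would produce one edge-transitive surface per subtype as a cycle double cover of a suitable edge-transitive cubic graph: the Platonic triangulations (tetrahedron, octahedron, icosahedron) realise the reflexible subtype, and for each of the other four subtypes I would pick an edge-transitive cubic graph $\Gamma$ from \cite{GraphSym,edgetransitiveconder} together with a cycle whose $\Aut(\Gamma)$-orbit is a CDC, then check that the resulting $1$-skeleton is simple and compute the orbit lengths of the automorphism group on vertices, edges and faces to confirm the intended subtype.

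The step I expect to be the main obstacle is the second one: showing that the abstract local possibilities are faithfully mirrored by the global orbit structure — in particular the two ``only if'' implications, that vertex- or face-transitivity of $\Aut(X)$ already forces the corresponding local involution into the edge-stabiliser. A vertex-transitive group need not a priori contain an automorphism that fixes a prescribed edge while interchanging its endpoints, and it is precisely in excluding such ``twisted'' configurations — and, relatedly, degenerate triangulations whose $1$-skeleton fails to be simple, such as two triangles glued along all three of their edges — that the simplicial structure and the simple-$1$-skeleton hypothesis have to be used with care. Realising all five subtypes is comparatively routine once the right cubic graphs are identified, though one still has to verify that each constructed cover is a genuine simplicial surface with simple $1$-skeleton.
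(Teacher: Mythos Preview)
Your local analysis --- that the edge stabiliser embeds into $C_2\times C_2$ --- is correct and matches the paper's Lemma~3.4, and your face-transitivity criterion (stabiliser meets $\{b,ab\}$) also goes through. The gap is in the vertex-transitivity criterion: \emph{every} edge-transitive surface is vertex-transitive, regardless of the edge stabiliser. This is Proposition~3.2 of the paper, and the argument is short: if there were two vertex orbits then edge-transitivity would force every edge to have one endpoint in each, so the $1$-skeleton would be bipartite --- impossible, since it contains triangles. Concretely, the paper exhibits a surface $X^{(2,1)}$ with \emph{trivial} edge stabiliser that is nonetheless vertex-transitive; your scheme would place it in the ``neither'' box. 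So the ``only if'' direction you rightly flagged as the main obstacle is not just hard but actually false, and your four types collapse to two (face-transitive or not).

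The paper's four types come instead from the face-edge type $\fe(X)=\bigl(\lvert X_2^{\Aut(X)}\rvert,\,\lvert\stab_{\Aut(X)}(e)\rvert\bigr)$, which Theorem~3.5 pins down to $\{(1,4),(1,2),(2,2),(2,1)\}$ by an orbit-stabiliser count ruling out $(1,1)$ and $(2,4)$. The split into five subtypes then arises not by refining the flag-regular case $(1,4)$ as you propose, but by $(1,2)$ breaking into two according to how the associated group acts on the face graph. Your instinct that the five subgroups of $C_2\times C_2$ should parametrise the five subtypes is not unreasonable --- $(1,4)$, $(2,2)$, $(2,1)$ do correspond to the full group, $\langle a\rangle$, and the trivial group, respectively --- but even granting that, the coarser partition into four types is by $\bigl(\#\text{face orbits},\lvert\text{edge stabiliser}\rvert\bigr)$ and not by the vertex/face transitivity pair.
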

\begin{theorem*}
    \label{thm:main}
    There are exactly $2185$ edge-transitive surfaces with up to $5000$ faces.
\end{theorem*}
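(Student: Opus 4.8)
The plan is to combine the structural classification of edge-transitive surfaces (the first main theorem) with an explicit enumeration over the known census of edge-transitive cubic graphs. First I would recall that any edge-transitive surface $X$ gives rise to an edge-transitive cubic graph $\Gamma$, namely the face-edge incidence graph, together with a cycle double cover of $\Gamma$ whose cycles are the orbits (under a transitive subgroup of $\Aut(\Gamma)$) of the face boundaries of $X$. Since the number of faces of $X$ equals the number of vertices of $\Gamma$, bounding the number of faces of $X$ by $5000$ amounts to restricting to edge-transitive cubic graphs on at most $5000$ vertices, all of which are available from the censuses in \cite{GraphSym,edgetransitiveconder}.

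Next I would use the first main theorem to reduce the search: there are only $4$ types of edge-transitive surfaces splitting into $5$ subtypes, and for each subtype the theorem (together with its proof) pins down the local combinatorial structure, in particular the possible vertex degrees and the cycle lengths appearing in the associated CDC. This drastically narrows the cycles one must consider in each $\Gamma$: rather than enumerating all cycle double covers, one only looks for CDCs of the prescribed subtype, each of which can be generated as a union of orbits of short cycles under $\Aut(\Gamma)$ (or an edge-transitive subgroup thereof). Concretely, for each edge-transitive cubic graph $\Gamma$ on $n\le 5000$ vertices, I would: (i) compute $\Aut(\Gamma)$ and its edge-transitive subgroups; (ii) for each candidate cycle length allowed by the subtype classification, enumerate the orbits of cycles of that length; (iii) test which unions of such orbits form a CDC that yields a \emph{simplicial} surface with the simple-graph condition on vertex-edge incidences; and (iv) discard those whose resulting surface fails to be edge-transitive or is isomorphic to one already found.

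The bookkeeping step is to organise the output so that isomorphic surfaces are identified exactly once. Here I would rely on a canonical-form computation for simplicial surfaces (e.g.\ via the incidence structure and its automorphism group), deduplicate across all graphs $\Gamma$ and all subtypes, and tabulate the survivors. Summing over the four types and five subtypes then gives the count $2185$; I would present the breakdown in a table, cross-checked against the subtype classification (each surface must land in exactly one subtype, giving an internal consistency check).

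The main obstacle I anticipate is computational rather than conceptual: for the larger edge-transitive cubic graphs in the census, $\Aut(\Gamma)$ and the relevant cycle orbits can be large, so a naive enumeration of candidate CDCs is infeasible. The key to making this tractable is exactly the structural restriction from the first main theorem, which forces the CDC cycles to have one of a small number of lengths and to be arranged highly symmetrically; exploiting this — rather than searching the full (exponentially large) space of cycle double covers — is what brings the computation into range. A secondary subtlety is ensuring the simplicity/non-degeneracy conditions in the definition of an edge-transitive surface are correctly enforced, so that, e.g., surfaces with multi-edges in their vertex-edge incidence graph are excluded from the count.
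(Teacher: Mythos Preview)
Your overall strategy matches the paper's: reduce to the census of edge-transitive cubic graphs on at most $5000$ vertices, use the face-edge type classification to restrict the search, compute suitable edge-transitive subgroups $H\le\Aut(\Gamma)$, build the corresponding CDCs, and deduplicate. That is exactly what is done in Section~\ref{section:etimplementation}.

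Where your proposal diverges from the paper is in step (ii)--(iii). You suggest enumerating orbits of cycles of each admissible length and then testing which \emph{unions} of such orbits form a simplicial CDC. The paper is sharper: the analysis in Section~\ref{section:econstruction} shows that for every face-edge type the CDC is a \emph{single} $H$-orbit of one automorphism-induced $\alpha$-cycle $\alpha(\sigma,F_1,\ldots)$, where $\sigma\in H$ is determined up to a small finite choice by a fixed edge (or pair of adjacent edges) of $\Gamma$. There is therefore no search over cycle lengths or over unions of orbits; one simply runs over the relevant subgroups $H$ (those of order $s\lvert V\rvert$ with $s\in\{1,2,4\}$) and over the finitely many $\sigma$ satisfying the incidence condition in Definitions~\ref{def:e(1,4)group} and~\ref{def:ef21group}, and checks vertex-faithfulness. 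This is what makes the enumeration up to $5000$ faces practical.

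Two smaller points: the cycles of the CDC correspond to the \emph{umbrellas of the vertices} of $X$, not to ``face boundaries''; and the classification does not prescribe the cycle lengths (vertex degrees) a priori --- they are outputs of the $\alpha$-cycle construction, not inputs to a search.
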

The computed census of edge-transitive surfaces and algorithms to construct edge-transitive surfaces are available in \cite{edgetransitivesurfacesData}.
Here, we consider the following approach to construct edge-transitive surfaces: 
Let $\Gamma$ be an edge-transitive cubic graph. Our goal is to construct a CDC of $\Gamma$ such that the resulting surface is edge-transitive. For this, we compute a suitable subgroup $H\leq \Aut(\Gamma)$ and exploit $H$ to construct a CDC that yields a desired edge-transitive surface.
We address the conditions and the suitable choice of subgroups, depending on an invariant called the face-edge type (Definition~\ref{definition_vertexedgetype}) in Section~\ref{section:econstruction}. We translate these results into an algorithm to enumerate all edge-transitive surfaces corresponding to a given edge-transitive cubic graph.

Our paper is structured as follows: In 
\Cref{sec:prelim} we present the theoretical background that is essential for this work. Here, we briefly comment on simplicial surfaces and cubic graphs. Note,  a more detailed introduction to simplicial surfaces and related cubic graphs can be found in \cite[Section~2]{akpanya2025censusfacetransitivesurfaces} and also \cite{Akpanya:1016413,Grtzen:992149,automorphism}. In \Cref{section:et} we examine properties of edge-transitive surfaces. In particular, we introduce the face-edge type of an edge-transitive surface and show that there exist exactly four types of edge-transitive surfaces (see \Cref{theorem:invariants2}). In \Cref{section:econstruction} we address the different types of edge-transitive surfaces and hence establish procedures to construct these surfaces from edge-transitive cubic graphs. Lastly, we present our data base of edge-transitive surfaces and discuss our implementations in \Cref{section:etimplementation}.

\section{Preliminaries }\label{sec:prelim}
We start this section by giving some preliminary notes on simplicial surfaces and cubic graphs. 
For this, we introduce some useful notions to handle simplicial complexes and their elements.

If $X$ is a simplicial complex, we define $X_i:=\{x\in X \mid \vert x \vert =i+1\}$.
  For $x\in X$ we set $X_i(x)$ as 
        $X_i(x):=
    \{s \in X_i \mid s\subseteq x\}$ if  $i \leq \vert x\vert$
and as 
    $X_i(x):=
    \{s \in X_i\mid x\subseteq s\}$ if $ i>\vert x\vert.$
  Moreover, we denote the subsets $\{v\}$ of $X$ by $v$ for simplicity.
 This allows us to define simplicial surfaces.
 Here, we refer to a simplicial complex $X$ with $\vert X\vert < \infty$ as a \emph{\textbf{simplicial surface}}, if the following three properties are satisfied:
    (i) $X$ is pure and of dimension $2$,
    (ii) $\vert X_2(e)\vert  = 2$ for all $e \in X_1$,
    (iii) for all $v\in X_0$ there exists an $n>0$ such that $\vert X_2(v)\vert= n$. These $n$ faces can be arranged in a sequence $(F_1,\ldots,F_n)$ such that $\vert F_i\cap F_{i+1}\vert =2$ for $i=1,\ldots, n$ (Subscripts are read modulo $n$). We call $\deg_X(v):=n$ the \textbf{\emph{degree}} of $v$ and the sequence $u(v):=(F_1,\ldots,F_n)$ the \textbf{\emph{umbrella}} of $v$.
The elements in $X_0,X_1$ and $X_2$ are called vertices, edges and faces of $X$, respectively.
    The \textbf{\emph{Euler characteristic}} of $X$ is given by $\chi (X) = |X_0| - |X_1| + |X_2|$. We further denote the \textbf{\emph{automorphism group}} of $X$ by $\Aut(X)$.
 This group acts on $X$ via $\Aut(X)\times X\to X,(\phi,x)\mapsto \phi(x)$. With respect to this action, the \textbf{\emph{orbit}} of $x\in X$ under a subgroup $H\leq \Aut(X)$ is denoted by $x^H$.
We say that $X$ is \textbf{\emph{edge-transitive}}, if $\vert {X_1}^{\Aut(X)}\vert =1$.
For simplicity, we define notions such as connectedness, orientability, etc.\ in the usual way.
Since the connected components of an edge-transitive surface are all isomorphic, we assume the simplicial surfaces in the remainder of this work to be connected.

The focus of this work lies on cubic graphs.
We assume all graphs in this work to be undirected, connected, simple and finite.  Here, a set of cycles of a cubic graph $\Gamma$ is called a \textbf{\emph{cycle double cover}} (CDC), if every edge of $\Gamma$ is contained in exactly two cycles. We call a CDC of $\Gamma$ \textbf{\emph{vertex-faithful}}, if any two cycles of the CDC intersect in at most one edge. Next, we relate simplicial surfaces to cubic graphs.
The \textbf{\emph{face graph}} of a simplicial surface $X$ is the graph $\mathcal{F}(X)=(V,E)$ defined by $V=X_2$ and $E=\{X_2(e)\mid e\in X_1\}.$ Hence, two vertices $F_1,F_2\in V$ are connected in $\mathcal{F}(X)$ if and only if $F_1 \cap F_2\in X_1$. 
Hence, $\mathcal{F}(X)$ forms a cubic graph.
 Note, if a cubic graph $\Gamma$ has a vertex-faithful cycle double cover, then $\Gamma$ is the face graph of a simplicial surface $X$ in the sense of this paper. In this case the cycles of the given CDC correspond to the vertices of the resulting simplicial surface.
We see that $\Aut(X)$ can be embedded into $\Aut(\mathcal{F}(X))$:
As described above, $\Aut(X)$ acts on $X_2$ via $\phi \cdot F = \phi(F)=\{\phi(v_1),\phi(v_2),\phi(v_3)\}$, where $\phi\in \Aut(X)$ and $F=\{v_1,v_2,v_3\}\in X_2$. This action induces a homomorphism $\lambda:\Aut(X) \to \Sym(X_2)$, associating the automorphism $\phi$ to the permutation it effects on $X_2$. 
Since $\phi$ preserves the face-edge incidences of $X$, the permutation $\lambda(\phi)$ can be interpreted as an automorphism of $\F(X)$ and thus $\lambda$ induces a homomorphism
\begin{equation}
\label{eq:lambdaX}
\lambda_X : \Aut(X) \to \Aut(\F(X)).
\end{equation}

We conclude this section by adjusting  the definition of an automorphism-induced $\alpha$-cycle (see \cite[Definition 4.3]{akpanya2025censusfacetransitivesurfaces}) so that it is tailored to our needs.

\begin{definition}
\label{def:alpha}
Let $\Gamma=(V,E)$ be a cubic graph and $F_1,\ldots,F_n\in V$ such that $\{F_i,F_{i+1}\}\in E$ for $i=1,\ldots,n-1$. Furthermore, let $\sigma\in \Aut(\Gamma)$ be an automorphism of order $\ell$ satisfying $\sigma(F_1)=F_n$. We define a \emph{\bf$\sigma$-induced $\alpha$-cycle} as
    \[\alpha(\sigma,F_1,\ldots,F_n):=(\sigma(F_1),\ldots,\sigma(F_{n-1}),\ldots,\sigma^{\ell}(F_1),\ldots,\sigma^{\ell}(F_{n-1}))\]
    if $\vert \{\sigma^i(F_j)\mid i=0,\ldots, \ell-1,\, j=1,\ldots,n-1\} \vert =(n-1)\ell$, and as the empty cycle $()$ otherwise.
\end{definition}

\section{Edge-transitive surfaces}\label{section:et}
 
In this section, we study the action of $\Aut(X)$ on a given edge-transitive surface $X$.
 By definition, $\Aut(X)$ acts transitively on $X_1.$ 
This naturally raises the following question:
\begin{Question*}
How many orbits does 
$\Aut(X)$ induce on ${X_0}$ and ${X_2}$?    
\end{Question*}
 This question is addressed in detail in Propositions~\ref{lemma2:faceorbits} and \ref{lemma:onevertexorbit}.

\begin{proposition}\label{lemma2:faceorbits}
  If $X$ is an edge-transitive surface, then $\vert {X_2}^{\Aut(X)}\vert \leq 2$.
\end{proposition}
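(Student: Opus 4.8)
The plan is to exploit the fact that every face of a simplicial surface is a triangle with exactly three edges, and that $\Aut(X)$ acts transitively on $X_1$. Fix a face $F\in X_2$ and consider its three edges $e_1,e_2,e_3\in X_1(F)$. Each edge $e$ lies in exactly two faces by property (ii) of simplicial surfaces, so the ``other'' face $F'_i$ across $e_i$ is well defined. I would show that the orbit of $F$ under $\Aut(X)$ together with the orbit of $F'_1$ (say) already exhausts $X_2$.

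\medskip

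First I would set up a bipartite-type counting argument on the set of flags $(F,e)$ with $e\in X_1(F)$, or equivalently on incident face--edge pairs. Since $X$ is connected and $\Aut(X)$ is transitive on edges, for any edge $e$ the two faces containing $e$ are the two faces containing the fixed edge $e_1$, up to the action of $\Aut(X)$; that is, if $X_2(e_1)=\{F,F'\}$ then for every edge $e$ we have $X_2(e)=\{\phi(F),\phi(F')\}$ for some $\phi\in\Aut(X)$. Hence every face of $X$ is $\Aut(X)$-equivalent either to $F$ or to $F'$: indeed, take any face $G$; since $X$ is connected, $G$ shares an edge with some face, so $G\in X_2(e)$ for some edge $e$, and then $G\in\{\phi(F),\phi(F')\}$. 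Therefore $X_2 = F^{\Aut(X)}\cup (F')^{\Aut(X)}$, which immediately gives $\vert {X_2}^{\Aut(X)}\vert\le 2$.

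\medskip

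The main point to be careful about — and what I expect is the only real obstacle — is the justification that every face meets some edge and hence is covered by this argument, i.e.\ that connectedness of $X$ (in the surface sense) really does let us reach every face from $e_1$. This is essentially immediate from the definition of a simplicial surface, since every face has three edges and every edge has two faces, so the face graph $\F(X)$ is connected and the argument above is just the observation that $V(\F(X))$ is covered by the endpoints of the edges in a single $\Aut(\F(X))$-orbit (pulled back through $\lambda_X$). I would phrase the final write-up directly in terms of $X$ rather than $\F(X)$ to keep it self-contained, but the face-graph viewpoint is a useful sanity check. No subtlety about orientability or vertex degrees enters, so the proof should be short.
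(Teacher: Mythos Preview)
Your argument is correct and is essentially the same as the paper's: fix an edge $e$ with $X_2(e)=\{F,F'\}$, take any face $G$, pick an edge $e'\in X_1(G)$, and use edge-transitivity to get $\phi\in\Aut(X)$ with $\phi(e)=e'$, whence $G\in\{\phi(F),\phi(F')\}$. The only difference is that the paper does not invoke connectedness at all --- your worry there is unnecessary, since any face is a triangle and therefore has an edge, which is all you need.
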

\begin{proof}
  Let $e\in X_1$ be an edge with $X_2(e)=\{F_1,F_2\}$.
  We show that an arbitrary face $F\in X_2$ lies in the $\Aut(X)$-orbit of $F_1$ or $F_2.$
  For this purpose, let $e'\in X_1(F)$ be an edge. Since $X$ is edge-transitive, there is an automorphism $\phi \in \Aut(X)$ satisfying $\phi(e)=e'.$ 
  Since $\phi$ respects the incidences of $X$, we know that either $F_1$ or $F_2$ has to be mapped onto $F$ under $\phi$. 
\end{proof}
Hence, if $F$ is a face of an edge-transitive surface $X$, then $\vert F^{\Aut(X)}\vert \in \{\tfrac{1}{2}\vert X_2\vert,\vert X_2\vert\}.$ 
Next, we show that the automorphism group of an edge-transitive surface acts transitively on the corresponding vertices.

\begin{proposition}\label{lemma:onevertexorbit}
  If $X$ is an edge-transitive surface, then $\vert {X_0}^{\Aut(X)}\vert=1.$
\end{proposition}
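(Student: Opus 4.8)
The plan is to exploit edge-transitivity to move any vertex onto the endpoint of a fixed edge, and then to deal with the remaining ambiguity — namely, which of the two endpoints of that edge a given vertex lands on. So fix an edge $e\in X_1$ with $X_0(e)=\{v_1,v_2\}$. Given an arbitrary vertex $w\in X_0$, pick any edge $e'\in X_1(w)$ incident to $w$ (such an edge exists since $X$ is a pure $2$-dimensional simplicial surface, so $w$ has nonzero degree). By edge-transitivity there is $\phi\in\Aut(X)$ with $\phi(e)=e'$, and since $\phi$ is a bijection preserving incidences, $\{\phi(v_1),\phi(v_2)\}=X_0(e')\ni w$; hence $w\in v_1^{\Aut(X)}\cup v_2^{\Aut(X)}$. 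Thus there are at most two vertex orbits, and it remains to show $v_1^{\Aut(X)}=v_2^{\Aut(X)}$.

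To collapse the two orbits into one, I would find an automorphism that fixes $e$ setwise but swaps its two endpoints. Recall $X_2(e)=\{F_1,F_2\}$, so locally around $e$ the surface looks like two triangles glued along $e$; write $F_1=\{v_1,v_2,a\}$ and $F_2=\{v_1,v_2,b\}$ with $a\neq b$ (the graph on vertices being simple, as required in the definition of edge-transitive surface, guarantees $a,b\notin e$ and $a\neq b$). Now consider the edge $e_1=\{v_1,a\}\in X_1(F_1)$ and the edge $e_2=\{v_2,a\}\in X_1(F_1)$: both are edges of the same face $F_1$. By edge-transitivity there is $\psi\in\Aut(X)$ with $\psi(e_1)=e_2$. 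The image $\psi(F_1)$ is a face containing $e_2$, i.e.\ $\psi(F_1)\in X_2(e_2)$; one then argues, by chasing the incidences of the umbrella of $a$ (or simply by a short case analysis on where $\psi$ sends $F_1$), that one can choose $\psi$ — possibly after composing with a further automorphism fixing $e_2$ — so that $\psi(v_1)=v_2$. Since $\psi$ sends a vertex of $e$ to a vertex of $e$, we get $v_2\in v_1^{\Aut(X)}$, whence the two orbits coincide and $\vert X_0^{\Aut(X)}\vert=1$.

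The main obstacle is the second paragraph: edge-transitivity alone gives an automorphism carrying one edge to another, but it does not a priori control which endpoint goes where, so one has to rule out the possibility that \emph{every} automorphism stabilising $e$ also fixes $v_1$ and $v_2$ pointwise. The clean way around this is to avoid stabilisers of $e$ entirely and instead use two edges sharing exactly one vertex: if $e=\{v_1,v_2\}$ and $f=\{v_2,c\}$ share only $v_2$, then an automorphism $\psi$ with $\psi(e)=f$ must send the shared structure consistently, and by picking $f$ appropriately (e.g.\ $f$ another edge of the face $F_1$ through $v_2$) and tracking the third vertex of the common face, one forces $\psi(v_2)$ to be the common vertex $v_2$ of $e$ and $f$ and hence $\psi(v_1)=v_1$ — or, choosing the incidences the other way, $\psi(v_1)=v_2$. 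Making this bookkeeping precise — that among the automorphisms realising edge-transitivity at $e$ there is genuinely one exchanging the endpoints — is the only real content; everything else is the routine ``incidence-preserving bijection'' argument already used in the proof of Proposition~\ref{lemma2:faceorbits}.
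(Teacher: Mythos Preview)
Your reduction to ``at most two vertex orbits'' is fine, and picking the two edges $e_1=\{v_1,a\}$ and $e_2=\{v_2,a\}$ of the face $F_1$ is the right move. The gap is in what comes next. You try to arrange $\psi(v_1)=v_2$, and the devices you propose --- chasing the umbrella of $a$, splitting on the value of $\psi(F_1)$, or post-composing with an element of $\stab(e_2)$ --- do not deliver this. In particular, asking for an automorphism that fixes $e_2$ setwise and swaps its endpoints is precisely the statement you are trying to prove, transported to a different edge, so that route is circular; and nothing in the face incidences forces $\psi$ to fix the shared vertex $a$.

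The repair is much simpler than what you attempt. From $\psi(\{v_1,a\})=\{v_2,a\}$ there are only two cases: either $\psi(v_1)=v_2$, and you are done; or $\psi(v_1)=a$ and $\psi(a)=v_2$, whence $v_1$, $a$, $v_2$ all lie in the same $\Aut(X)$-orbit, and you are done again. You never need $\psi(v_1)=v_2$ on the nose. For comparison, the paper's argument sidesteps constructing any such $\psi$: assuming two orbits $V_1,V_2$, edge-transitivity forces every edge to have one endpoint in each $V_i$, and then the three edges of a single face give an impossible proper $2$-colouring of a triangle. That is the same parity obstruction your two-case analysis encodes, but phrased without ever naming an automorphism.
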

\begin{proof}
With similar arguments as presented in the proof of Proposition~\ref{lemma2:faceorbits}, we can show that the action of $\Aut(X)$ on $X_0$ yields at most two vertex-orbits. 
So, let us assume that there are exactly two $\Aut(X)$-orbits on $X_0$, namely $V_1$ and $V_2.$
If there exists an edge $e\in X_1$ and an $i\in \{1,2\}$ such that $e$ is incident to two vertices in $V_i$, then  $\vert {X_0}^{\Aut(X)}\vert=1$ follows from the edge-transitivity of $X$. This contradicts our assumption. So, every edge of $X$ is incident to a vertex in $V_1$ and to a vertex in $V_2$.
Now, let $F\in X_2$ be a face with $X_1(F)=\{e_1,e_2,e_3\}$ and $X_0(F)=\{v_1,v_2,v_3\}$, where $v_i\notin X_0(e_i)$ for $i=1,2,3.$
The edges $e_1$ and $e_2$ are both incident to vertices that lie in distinct $\Aut(X)$-orbits. Without loss of generality, we assume $v_3\in V_1$ and $v_1,v_2\in V_2$. This implies that $e_3$ is incident to two vertices in $V_2$, namely $v_1$ and $v_2$. This is a contradiction and hence we conclude the result.

\end{proof}

 In the following, we introduce the face-edge type of a given edge-transitive surface. This invariant will enable us to construct edge-transitive surfaces from given edge-transitive cubic graphs in \Cref{section:econstruction}.
\begin{definition}\label{definition_vertexedgetype}
  Let $X$ be an edge-transitive surface and $S=\stab_{\Aut(X)}(e)$ the stabiliser of an edge $e\in X_1$ in $\Aut(X)$. We define the \emph{\textbf{face-edge type}} $\fe(X)$ of $X$ as 
  \[
  \fe(X):=(\vert {X_2}^{\Aut(X)}\vert, \vert S \vert ).
  \]
\end{definition}
Since the edge-stabilisers of an edge-transitive surface $X$ are all conjugate in $\Aut(X)$, the above invariant is well-defined and not dependant on the edge-choice.
In Section~\ref{section:econstruction}, we make use of the face-edge type to achieve our desired classification of edge-transitive surfaces with at most $5000$ faces. For this purpose, we need to examine the different values taken by the entries of the face-edge type. Because of Proposition~\ref{lemma2:faceorbits} we know that the first entry of a face-edge type is either one or two. The following lemma helps us to further determine the possible values of the second entry of a face-edge type.

\begin{lemma}\label{lem:C2C2}
    Let $X$ be an edge-transitive surface and $e\in X_1$ an edge in $X_1.$ Then $\Aut(X)$ can be embedded into the group $C_2\times C_2.$
\end{lemma}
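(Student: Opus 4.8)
The plan is to produce an explicit injective homomorphism from the edge-stabiliser $S=\stab_{\Aut(X)}(e)$ (the group appearing in Definition~\ref{definition_vertexedgetype}) into $C_2\times C_2$, realised concretely as $\Sym(\{v_1,v_2\})\times\Sym(\{F_1,F_2\})$. Write $e=\{v_1,v_2\}$ for the two vertices of $e$ and $X_2(e)=\{F_1,F_2\}$ for the two faces incident to $e$; there are exactly two such faces by property~(ii) in the definition of a simplicial surface, and in particular $F_1\neq F_2$.

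First I would set up the map. Any $\phi\in S$ satisfies $\phi(e)=e$, so $\phi$ permutes the two-element set $\{v_1,v_2\}$; and since $\phi$ preserves the face--edge incidences of $X$ we get $\phi(X_2(e))=X_2(\phi(e))=X_2(e)$, so $\phi$ permutes $\{F_1,F_2\}$ as well. Recording these two restrictions defines a group homomorphism
\[
\rho\colon S\;\longrightarrow\;\Sym(\{v_1,v_2\})\times\Sym(\{F_1,F_2\})\;\cong\;C_2\times C_2,
\]
and it remains only to show that $\rho$ is injective.

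The heart of the argument is a rigidity statement: an automorphism fixing $v_1$, $v_2$, $F_1$ and $F_2$ must be the identity. So let $\phi\in\ker\rho$. Since $F_1$ has exactly three vertices, two of which are $v_1$ and $v_2$, and $\phi$ fixes $F_1$ setwise while fixing $v_1$ and $v_2$, it also fixes the third vertex of $F_1$; hence $\phi$ fixes $F_1$ \emph{pointwise}. Now I would propagate this across $X$: call a face $G\in X_2$ \emph{good} if $\phi$ fixes each of its three vertices. If $G$ is good and $G'\in X_2$ shares an edge $f=\{a,b\}$ with $G$, then $\phi$ fixes $a$ and $b$; since $\phi$ permutes the two-element set $X_2(f)=\{G,G'\}$ and fixes $G$, it fixes $G'$; and as $\phi$ fixes $a,b$ while $G'$ has a unique further vertex, $\phi$ fixes that vertex too, so $G'$ is good. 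Thus the set of good faces is non-empty and closed under edge-adjacency in $\F(X)$, which is connected because $X$ is connected; therefore every face is good. Since every vertex of $X$ lies in some face (purity), $\phi$ fixes $X_0$ pointwise, so $\phi=\mathrm{id}$. Hence $\ker\rho$ is trivial and $S$ embeds into $C_2\times C_2$.

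I expect the only non-formal point to be the rigidity step: that a single fixed incident triple $(v,e,F)$ already pins down an automorphism of a connected simplicial surface. This is a standard ``unique extension'' phenomenon and the connectedness of $\F(X)$ does all the work; the rest ($\rho$ being well defined and a homomorphism, and the two relevant sets each having two elements) is immediate. As a by-product the lemma yields $\vert S\vert\in\{1,2,4\}$, which is exactly what is needed to pin down the second coordinate of $\fe(X)$ in the subsequent analysis.
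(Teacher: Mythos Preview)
Your proof is correct and follows essentially the same approach as the paper: both record the action of the edge-stabiliser on the two vertices of $e$ together with its action on the two adjacent faces (the paper uses the opposite vertices $w_1,w_2$ of $F_1,F_2$ instead of the faces themselves, which is equivalent) to obtain a homomorphism into $C_2\times C_2$. Your version is in fact more complete, since you spell out the propagation argument through $\F(X)$ establishing injectivity, whereas the paper simply asserts that $\phi$ is determined by the images of $v_1,v_2,w_1,w_2$.
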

\begin{proof}
First, let $v_1,v_2\in X_0$ be vertices with $X_0(e)=\{v_1,v_2\}$, and $F_1,F_2\in X_2$ faces satisfying $X_2(e)=\{F_1,F_2\}.$ For every $i=1,2$ let $w_i\in X_0$ be the vertex such that $w_i\in X_0(F_i)\setminus\{v_1,v_2\}.$
Every automorphism $\phi \in \Aut(X)$ that stabilises $e$ has to permute the two vertices $v_1$ and $v_2$, and also $w_1$ and $w_2.$
We know that the automorphism $\phi$ is uniquely identified by the images of $v_1,v_2,w_1,w_2$ under $\phi.$ This means that $\phi$ can be identified with a permutation $$\pi\in \{(),(v_1,v_2),(w_1,w_2),(v_1,v_2)(w_1,w_2)\}=\langle (v_1,v_2),(w_1,w_2)\rangle$$ which implies that $\stab_{\Aut(X)}(e)$ is isomorphic to a subgroup of $C_2\times C_2$.
\end{proof}
This allows us to establish that the face-edge type of an edge-transitive surface $X$ satisfies the following theorem.
\begin{theorem}\label{theorem:invariants2}
Let $X$ be an edge-transitive surface. Then the face-edge type of $X$ satisfies
\[
\fe(X)\in \{(1,2),(1,4),(2,1),(2,2)\}.
\]
\end{theorem}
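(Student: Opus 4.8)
The plan is to combine the two propositions already proved with the structural constraint from Lemma~\ref{lem:C2C2}, and then rule out the remaining impossible combinations by a small geometric argument. By Proposition~\ref{lemma2:faceorbits} the first entry $\vert X_2^{\Aut(X)}\vert$ is either $1$ or $2$, and by Lemma~\ref{lem:C2C2} the edge-stabiliser $S=\stab_{\Aut(X)}(e)$ embeds into $C_2\times C_2$, so $\vert S\vert\in\{1,2,4\}$. This already restricts $\fe(X)$ to the six pairs obtained from $\{1,2\}\times\{1,2,4\}$. The theorem asserts that exactly four of these occur, so I must exclude the two pairs $(1,1)$ and $(2,4)$.

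First I would handle $(1,1)$. Suppose $\vert X_2^{\Aut(X)}\vert=1$, i.e.\ $\Aut(X)$ acts transitively on faces. Fix an edge $e$ with $X_2(e)=\{F_1,F_2\}$. Transitivity on $X_2$ together with the standard counting identity (orbit--stabiliser applied to the action on flags, or equivalently to incident edge--face pairs) forces $\vert S\vert$ to be even: since $X$ is edge-transitive, $\vert\Aut(X)\vert=\vert X_1\vert\cdot\vert S\vert$; since $X$ is face-transitive, $\vert\Aut(X)\vert=\vert X_2\vert\cdot\vert\stab(F_1)\vert$; and each face has $3$ incident edges while each edge has $2$ incident faces, so $3\vert X_2\vert=2\vert X_1\vert$. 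Hence $\vert S\vert=\tfrac{3\vert X_2\vert\,\vert\stab(F_1)\vert}{2\vert X_1\vert}\cdot\tfrac{2\vert X_1\vert}{3\vert X_2\vert}$—more cleanly, $\vert X_1\vert\cdot\vert S\vert=\vert X_2\vert\cdot\vert\stab(F_1)\vert$ and $2\vert X_1\vert=3\vert X_2\vert$ give $3\vert S\vert=2\vert\stab(F_1)\vert$, so $\vert S\vert$ is even. Thus $(1,1)$ is impossible. (Alternatively, and perhaps more in the spirit of the paper: if $X$ is both edge- and face-transitive then some automorphism must map $F_1$ to $F_2$ while fixing $e$ setwise, giving a nontrivial element of $S$.)

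Next I would exclude $(2,4)$. Suppose $\vert S\vert=4$, so by Lemma~\ref{lem:C2C2} we have $S=\langle(v_1,v_2),(w_1,w_2)\rangle$ in the notation of that proof: in particular there is an automorphism $\phi\in S$ realising the transposition $(w_1,w_2)$ while fixing $v_1,v_2$ (hence fixing $e$), so $\phi$ swaps $F_1$ and $F_2$. Therefore $F_1$ and $F_2$ lie in the same $\Aut(X)$-orbit, and by the argument in the proof of Proposition~\ref{lemma2:faceorbits} every face is conjugate to $F_1$ or $F_2$, hence every face lies in a single orbit; that is, $\vert X_2^{\Aut(X)}\vert=1$. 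This contradicts the assumption that the first entry equals $2$, so $(2,4)$ cannot occur. Combining the two exclusions with the six a priori possibilities leaves precisely $\{(1,2),(1,4),(2,1),(2,2)\}$.

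I expect the main obstacle to be the exclusion of $(1,1)$: one must make sure the ``face-transitive forces a nontrivial edge-stabiliser'' step is airtight, since it is the only place a genuine geometric (rather than purely group-theoretic) input is needed. The cleanest route is the flag-counting identity $3\vert S\vert=2\vert\stab_{\Aut(X)}(F)\vert$ derived above, which shows $2\mid\vert S\vert$ with no case analysis; I would present that and then note that the $(2,4)$ exclusion is immediate from the explicit generators of $S$ exhibited in Lemma~\ref{lem:C2C2}. The remaining four types should then be recorded as each being realised, but since the theorem statement only asserts membership in that set, no constructions are required here—those will be supplied in Section~\ref{section:econstruction}.
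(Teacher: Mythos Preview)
Your proof is correct and broadly parallels the paper's: both start from the six a~priori pairs given by Proposition~\ref{lemma2:faceorbits} and Lemma~\ref{lem:C2C2}, and both eliminate $(1,1)$ by the same orbit--stabiliser count (the paper phrases it as $\vert\stab(F)\vert=\tfrac32$, you phrase it as $3\vert S\vert=2\vert\stab(F)\vert$ forcing $\vert S\vert$ even, which is the same identity).

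The exclusion of $(2,4)$ is where you genuinely diverge. The paper argues purely by counting: if $\fe(X)=(2,4)$ then $\vert\Aut(X)\vert=4\vert X_1\vert=6\vert X_2\vert$, whence $\vert\stab_{\Aut(X)}(F)\vert=12$; but any face stabiliser embeds in $\Sym(X_0(F))\cong S_3$, a contradiction. You instead exploit the explicit description of $S$ from the proof of Lemma~\ref{lem:C2C2}: if $\vert S\vert=4$ the embedding into $\langle(v_1,v_2),(w_1,w_2)\rangle$ is onto, so $S$ contains an automorphism swapping $w_1,w_2$ (hence $F_1,F_2$) while fixing $e$, forcing face-transitivity. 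Both arguments are short; yours is more geometric and avoids the external input that $\stab(F)\hookrightarrow S_3$, while the paper's keeps the two exclusions methodologically uniform (both are one-line orbit--stabiliser computations). One small caution: your parenthetical alternative for $(1,1)$---``some automorphism must map $F_1$ to $F_2$ while fixing $e$''---is not immediate from face-transitivity alone and would need the counting argument (or a Frattini-type argument) anyway, so I would drop it and keep only the clean identity $3\vert S\vert=2\vert\stab(F)\vert$.
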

\begin{proof}
By \Cref{lemma2:faceorbits} and \Cref{lem:C2C2}, we know that the face-edge type of $X$ satisfies $$\fe(X)\in \{(1,1),(1,2),(1,4),(2,1),(2,2),(2,4)\}.$$ Thus, we have to show that $\fe(X)\in \{(1,1),(2,4)\}$ is not possible. We prove this statement by contradiction. 
First, we assume $\fe(X)=(2,4).$ In this case, $\Aut(X)$ has exactly two orbits on $X_2$, where each orbit is of size $\tfrac{\vert X_2\vert }{2}.$
 By the orbit-stabiliser theorem, we then obtain $$\vert\stab_{\Aut(X)}(F)\vert\cdot \tfrac{\vert X_2\vert}{2}  = \vert \Aut(X)\vert =4\cdot \vert X_1\vert =4\cdot \tfrac{3}{2}\cdot \vert X_2 \vert =6\cdot \vert X_2\vert,$$ where $F\in X_2$ is a face. This implies $\vert\stab_{\Aut(X)}(F)\vert=12.$ Since the stabiliser $\stab_{\Aut(X)}(F)$ can be embedded into a symmetric group of order $6$ (see \cite[Lemma 3.7]{akpanya2025censusfacetransitivesurfaces}) this is a contradiction. 

Now, we examine the case $\fe(X)=(1,1).$ This means that  $X$ is face-transitive and all edge-stabilisers are trivial. 
Hence, the orbit-stabiliser theorem implies 
$$\vert\stab_{\Aut(X)}(F)\vert\cdot \vert X_2\vert  =\vert \Aut(X)\vert =1\cdot \vert X_1\vert =1\cdot \tfrac{3}{2}\cdot \vert X_2 \vert = \tfrac{3}{2}\cdot \vert X_2\vert,$$ where $F\in X_2$. 
Thus, $\vert \stab_{\Aut(X)}\vert =\tfrac{3}{2}$ follows, a contradiction. Hence, the result follows.
\end{proof}
\section{Construction of edge-transitive surfaces}
\label{section:econstruction}
Next, we target the construction of edge-transitive surfaces. Here, Theorem~\ref{theorem:invariants2} allows us to analyse edge-transitive surfaces with different face-edge types case by case and propose methods to construct these simplicial surfaces from their face graphs.  
Then, these construction methods are used to approach arbitrary cubic graphs and compute edge-transitive surfaces by providing suitable CDCs.
Since $\lambda_X:\Aut(X)\to \Aut(\F(X))$ is a monomorphism (see Equation~(\ref{eq:lambdaX})), we know that a cubic graph  yielding an edge-transitive surface has to be edge-transitive.
\begin{lemma}\label{lemma:arctransitive}
  The face graph $\mathcal{F}(X)$ of an edge-transitive surface $X$ is edge-transitive. 
\end{lemma}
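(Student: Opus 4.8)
The plan is to show that the edge-transitivity of $X$ transfers to $\mathcal F(X)$ along the monomorphism $\lambda_X:\Aut(X)\to \Aut(\mathcal F(X))$ from Equation~(\ref{eq:lambdaX}). First I would recall the bijective correspondence between the edge set of $\mathcal F(X)$ and the edge set $X_1$ of $X$: an edge of $\mathcal F(X)$ has the form $X_2(e)=\{F_1,F_2\}$ for a unique $e\in X_1$, since the two faces meeting along $e$ determine $e$ as $F_1\cap F_2$. So it suffices to move a fixed edge of this form to an arbitrary one.

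The main step is then a direct transport argument. Let $X_2(e)=\{F_1,F_2\}$ and $X_2(e')=\{F_1',F_2'\}$ be two edges of $\mathcal F(X)$, with $e,e'\in X_1$. By edge-transitivity of $X$ there is $\phi\in\Aut(X)$ with $\phi(e)=e'$. Since $\phi$ preserves the face--edge incidences of $X$, it maps $X_2(e)$ bijectively onto $X_2(e')$, i.e.\ $\{\phi(F_1),\phi(F_2)\}=\{F_1',F_2'\}$. Reading this through $\lambda_X$: the automorphism $\lambda_X(\phi)\in\Aut(\mathcal F(X))$ sends the vertex $F_i$ to $\phi(F_i)$, hence sends the edge $\{F_1,F_2\}$ of $\mathcal F(X)$ to the edge $\{F_1',F_2'\}$. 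Therefore $\Aut(\mathcal F(X))$ acts transitively on the edges of $\mathcal F(X)$.

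I expect no serious obstacle here; the only point that needs a word of care is the identification of edges of $\mathcal F(X)$ with elements of $X_1$ (so that an edge-mapping automorphism of $X$ genuinely induces an edge-mapping automorphism of the face graph), and the fact — already recorded in the excerpt — that $\lambda_X$ lands in $\Aut(\mathcal F(X))$ because automorphisms of $X$ respect face--edge incidences. Everything else is the unwinding of definitions. Note this lemma does not require the full strength of Theorem~\ref{theorem:invariants2}; it is the easy direction that sets up the case analysis in Section~\ref{section:econstruction}.
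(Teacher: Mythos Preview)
Your proposal is correct and follows exactly the line the paper takes: the paper's own justification is the single sentence preceding the lemma, namely that $\lambda_X:\Aut(X)\to\Aut(\mathcal F(X))$ is a monomorphism, from which edge-transitivity of $\mathcal F(X)$ is immediate. Your write-up simply unpacks this observation---making explicit the bijection $e\mapsto X_2(e)$ between $X_1$ and the edge set of $\mathcal F(X)$ and the transport of an edge-mapping $\phi\in\Aut(X)$ through $\lambda_X$---so there is nothing to add.
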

Loosely speaking, the results of this section can be summarised as follows.
\begin{observation*}
 \label{thm:summaryedge}
 An edge-transitive surface $X$ can be reconstructed by exploiting its face-graph $\mathcal{F}(X)$ and the action of the group $\lambda_X(\Aut(X))\leq \Aut(\F(X))$ on $\mathcal{F}(X)$.   
\end{observation*}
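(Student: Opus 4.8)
The plan is to turn the informal reconstruction statement into a precise one by recalling the standard dictionary between simplicial surfaces and vertex‑faithful cycle double covers of their face graphs, and then invoking the transitivity results already proved. First I would record that, since $X$ is a simplicial surface, the face graph $\F(X)$ carries a canonical vertex‑faithful CDC: for each $v\in X_0$ the umbrella $u(v)=(F_1,\dots,F_n)$ is a closed walk in $\F(X)$ — consecutive faces share an edge of $X$, hence are adjacent in $\F(X)$, and the walk closes up by the definition of umbrella — and $\mathcal{C}_X:=\{u(v)\mid v\in X_0\}$ is a CDC because every $e\in X_1$ is incident to exactly the two vertices in $X_0(e)$, so the edge of $\F(X)$ corresponding to $e$ lies in exactly two of these cycles. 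Conversely, from the pair $(\F(X),\mathcal{C}_X)$ one recovers $X$ up to isomorphism: faces are the vertices of $\F(X)$, edges are the edges of $\F(X)$, vertices are the cycles of $\mathcal{C}_X$; a face lies on an edge iff the corresponding vertex of $\F(X)$ is an endpoint of the corresponding edge, and a vertex lies on a face (resp.\ on an edge) iff the corresponding cycle passes through that vertex (resp.\ traverses that edge) of $\F(X)$. So ``reconstructing $X$'' reduces to recovering $\mathcal{C}_X$ from $\bigl(\F(X),\,\lambda_X(\Aut(X))\bigr)$.

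Next I would bring in the group action. By \Cref{lemma:arctransitive} the graph $\F(X)$ is edge‑transitive, and since $\lambda_X$ is a monomorphism (Equation~\eqref{eq:lambdaX}) the subgroup $G:=\lambda_X(\Aut(X))\le\Aut(\F(X))$ is isomorphic to $\Aut(X)$, acting on the vertex set $X_2$ of $\F(X)$ exactly as $\Aut(X)$ acts on the faces of $X$. Any $\phi\in\Aut(X)$ carries umbrellas to umbrellas, so $G$ permutes the cycles of $\mathcal{C}_X$, and by \Cref{lemma:onevertexorbit} this permutation action is transitive. Fixing any $v_0\in X_0$ and putting $C_0:=u(v_0)$, we therefore obtain $\mathcal{C}_X=\{g(C_0)\mid g\in G\}=C_0^{\,G}$. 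Combining this with the dictionary above, $X$ is determined by the triple $\bigl(\F(X),\,G,\,C_0\bigr)$: one forms the orbit $C_0^{\,G}$ and reads off the simplicial surface. This is precisely the assertion of the observation.

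The step that requires genuine work — and is carried out case by case in \Cref{section:econstruction} — is the converse needed for the census: starting from an abstract edge‑transitive cubic graph $\Gamma$ and a subgroup $H\le\Aut(\Gamma)$, one must identify which cycles $C_0$ (the $\sigma$‑induced $\alpha$‑cycles of \Cref{def:alpha} being the natural candidates) have the property that $C_0^{\,H}$ is a \emph{vertex‑faithful} cycle double cover whose associated simplicial surface $X$ is edge‑transitive with $\F(X)\cong\Gamma$ and $H\le\lambda_X(\Aut(X))$. Guaranteeing vertex‑faithfulness and, more delicately, simplicity of the vertex‑edge graph — the defining requirement of an edge‑transitive surface — while keeping the realised symmetry group large enough, is the main obstacle; \Cref{theorem:invariants2} is what makes it manageable, since it restricts the face‑edge type to four values, each of which fixes the order of the edge‑stabiliser and the number of face orbits and thereby prescribes the admissible shapes of $C_0$ and of the subgroup $H$.
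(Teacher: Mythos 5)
Your argument is correct, and it is a legitimate way to make the informal observation precise; but it is not the route the paper takes, and the difference matters for what the observation is actually summarising. You give a uniform, type-independent argument: the umbrellas of $X$ form a vertex-faithful CDC of $\F(X)$ (vertex-faithfulness here comes from the simplicity requirement built into the paper's definition of an edge-transitive surface), the pair $(\F(X),\mathcal{C}_X)$ determines $X$, and by \Cref{lemma:onevertexorbit} the group $G=\lambda_X(\Aut(X))$ acts transitively on the umbrellas, so $\mathcal{C}_X=C_0^{\,G}$ for a single seed cycle $C_0$. That is clean and correct, but it reconstructs $X$ from the \emph{triple} $(\F(X),G,C_0)$, with the seed umbrella supplied as extra data. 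The paper's ``proof'' of the observation is the entire case analysis of \Cref{section:econstruction}: using \Cref{theorem:invariants2} to split into the four face-edge types, it shows in each case that the seed umbrella is itself a $\sigma$-induced $\alpha$-cycle $\alpha(\sigma,F_1,F_2)$ or $\alpha(\sigma,F_1,F_2,F_3)$ for some $\sigma\in G$ and a path of length at most two in $\F(X)$ --- i.e.\ the seed is recoverable from the graph and the group action alone, up to a finite, explicitly enumerable set of choices. Your approach buys generality and brevity; the paper's buys the algorithmic content (how to \emph{generate} candidate seeds from $(\Gamma,H)$) on which the census rests. You do flag this gap yourself in your final paragraph and point to the right place, so nothing is wrong --- just be aware that the orbit argument alone does not yet justify the word ``reconstructed'' in the strong sense the paper intends.
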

 
An essential tool that allows us to achieve the desired constructions is the notion of a face-colouring of a simplicial surface.

\begin{definition}
    Let $X$ be a simplicial surface. A map $f:X_2\to \{1,2\}$ is called a \define{face-$2$-colouring} of $X$ if every pair of faces $F_1,F_2\in X_2$ with $X_1(F_1)\cap X_1(F_2)\neq \emptyset$ satisfies $f(F_1)\neq f(F_2).$
\end{definition}
We observe that a simplicial surface $X$ has a face-$2$-colouring if and only if $\F(X)$ is bipartite. This provides us with the foundation to conduct the described case-by-case analysis.

\subsection{Edge-transitive surfaces with face-edge type (1,4)}\label{subsection:fe(1,4)}
First, we discuss the structure of an edge-transitive surface with face-edge type $(1,4)$.  
Per definition, such a simplicial surface $X$ is face-transitive. Thus, we obtain the desired description of $X$ by determining the corresponding vertex-face type $\vf(X)$, see \cite[Definition 3.4]{akpanya2025censusfacetransitivesurfaces}. The vertex-face type of a face-transitive surface $X$ is defined similarly to the face-edge type, with the first entry giving $\vert {X_0}^{\Aut(X)}\vert $ and the second entry the order of a face stabiliser. 

\begin{theorem}\label{thm:fe14}
A simplicial surface $X$ is edge-transitive with $\fe(X)=(1,4)$ if and only if $X$ is face-transitive with $\vf(X)=(1,6).$
\end{theorem}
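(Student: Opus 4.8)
The plan is to prove both implications by counting with the orbit--stabiliser theorem and then pinning down the combinatorial structure forced by the resulting stabiliser orders. For the forward direction, assume $X$ is edge-transitive with $\fe(X)=(1,4)$. By definition $X$ is face-transitive, and since every edge-transitive surface has $\vert {X_0}^{\Aut(X)}\vert = 1$ (Proposition~\ref{lemma:onevertexorbit}), the first entry of $\vf(X)$ is $1$. It remains to show the face stabiliser has order $6$. I would apply orbit--stabiliser twice: on the one hand $\vert \Aut(X)\vert = \vert S\vert \cdot \vert X_1\vert = 4\vert X_1\vert = 6\vert X_2\vert$ using $\vert X_1\vert = \tfrac32 \vert X_2\vert$; on the other hand face-transitivity gives $\vert \Aut(X)\vert = \vert\stab_{\Aut(X)}(F)\vert\cdot\vert X_2\vert$. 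Comparing yields $\vert\stab_{\Aut(X)}(F)\vert = 6$, hence $\vf(X)=(1,6)$.

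For the converse, assume $X$ is face-transitive with $\vf(X)=(1,6)$. Face-transitivity plus $\vert {X_0}^{\Aut(X)}\vert=1$ (which needs to be established independently here, or via the analogous statement for face-transitive surfaces already in the cited companion paper) and the fact that a face stabiliser of order $6$ must act as the full symmetric group $\Sym(3)$ on the three vertices (resp.\ three edges) of the fixed face — this is where I would use that the face stabiliser embeds into a symmetric group of order $6$, cf.\ \cite[Lemma~3.7]{akpanya2025censusfacetransitivesurfaces}, so order $6$ forces the image to be all of $\Sym(3)$. In particular, for any edge $e$ of the fixed face $F$, the subgroup of $\stab(F)$ that stabilises $e$ has order $2$ and swaps the two endpoints of $e$; combined with the second face incident to $e$ (whose stabiliser likewise acts fully on its vertices), one gets an element swapping the two apices $w_1,w_2$ as well. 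This shows $\vert\stab_{\Aut(X)}(e)\vert = 4$, realising the full $C_2\times C_2$ of Lemma~\ref{lem:C2C2}. Edge-transitivity itself follows from the count $\vert\Aut(X)\vert = 6\vert X_2\vert = 4\vert X_1\vert$ together with the fact that $\stab(e)$ has order $4$: orbit--stabiliser gives a single edge orbit.

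I expect the main obstacle to be the converse direction, specifically justifying that a face stabiliser of order $6$ genuinely produces an element of the \emph{edge} stabiliser swapping the two apices $w_1,w_2$ — one must combine the two face stabilisers $\stab(F_1)$ and $\stab(F_2)$ (for the two faces incident to $e$) and argue that their interaction inside $\Aut(X)$ forces $\vert\stab(e)\vert=4$ rather than merely $2$. A clean way to handle this is to first show edge-transitivity directly (from the global count, which only needs $\vert\stab(F)\vert=6$ and the hand-shaking identity $2\vert X_1\vert = 3\vert X_2\vert$), and only afterwards deduce $\vert\stab(e)\vert$ from the orbit--stabiliser theorem applied to the now-known edge-transitive action; that way the delicate local gluing argument is replaced by a second global count. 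The remaining steps — the forward direction and the identification of the first entries of the types — are routine applications of the propositions already proved in Section~\ref{section:et}.
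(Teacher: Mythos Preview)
Your approach is essentially the paper's: the forward direction is identical, and for the converse the paper does exactly what you call the ``clean way'' --- it first argues that $\stab_{\Aut(X)}(F)$ (being of order $6$ and embedding into $\Sym(3)$) is transitive on the three edges $X_1(F)$, so face-transitivity gives edge-transitivity, and then orbit--stabiliser yields $\vert\stab_{\Aut(X)}(e)\vert=4$.

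Two small points to tighten. First, your ``clean way'' as phrased has a gap: the global count $\vert\Aut(X)\vert = 6\vert X_2\vert = 4\vert X_1\vert$ alone does not force a single edge orbit; you still need either $\vert\stab(e)\vert\le 4$ or the fact that $\stab(F)$ is transitive on $X_1(F)$. You already have the latter from your $\Sym(3)$ observation, so just use it explicitly --- that is precisely what the paper does, and it makes the gluing of the two face stabilisers entirely unnecessary. Second, your worry about establishing $\vert X_0^{\Aut(X)}\vert = 1$ in the converse is misplaced: this is part of the hypothesis $\vf(X)=(1,6)$ (it is the first entry), and in any case you do not need vertex-transitivity to conclude $\fe(X)=(1,4)$ --- only face-transitivity (hypothesis) and $\vert\stab(e)\vert=4$. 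Where you \emph{do} need it is in the forward direction, to justify that the first entry of $\vf(X)$ is $1$; there Proposition~\ref{lemma:onevertexorbit} supplies it, as you note.
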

\begin{proof}
If $X$ is edge-transitive with $\fe(X)=(1,4)$, then we know that $X$ is face-transitive. 
We compute the order of a face-stabiliser with the orbit-stabiliser theorem. If $F\in X_2$, then
\[
\vert\stab_{\Aut(X)}(F)\vert\cdot \vert X_2\vert=\vert \Aut(X)\vert =4\cdot\vert X_1\vert=4\cdot \tfrac{3}{2}\cdot \vert X_2\vert= 6\cdot \vert X_2\vert.
\]
Thus, $\vert\stab_{\Aut(X)}(F)\vert=6$ and $\vf(X)=(1,6)$ follow.
Furthermore, if $X$ is face-transitive surface with $\vf(X)=(1,6)$, then the stabiliser of a face $F\in X_2$ is transitive on the edges $X_1(F).$ Hence, $\Aut(X)$ is transitive on $X_1.$ For an edge $e\in X_1$ we obtain 
\[
\vert\stab_{\Aut(X)}(e)\vert\cdot \vert X_1\vert=\vert \Aut(X)\vert =6\cdot\vert X_2\vert=6\cdot \tfrac{2}{3}\cdot \vert X_1\vert= 4\cdot \vert X_1\vert,
\]
 which
 implies $\fe(X)=(1,4).$
 
 \end{proof}
Theorem~\ref{thm:fe14} establishes that if $X$ is an edge-transitive surface with $\fe(X)=(1,4),$ then there exist $\sigma\in H:=\lambda_X(\Aut(X))$ and an edge $\{F_1,F_2\}$ in $\mathcal{F}(X)$ such that the CDC corresponding to $X$ can be computed as the $H$-orbit of the $\alpha$-cycle $\alpha(\sigma,F_1,F_2)^H$ in $\mathcal{F}(X)$, see \cite[Section 4.3]{akpanya2025censusfacetransitivesurfaces} for more details.

\subsection{Edge-transitive surfaces with face-edge type (1,2)}\label{subsection:fe(1,2)}

Next, we examine edge-transitive surfaces with face-edge type $(1,2)$. Again, these simplicial surfaces are face-transitive. Thus, we determine the corresponding vertex-face types in order to obtain our desired description.

\begin{theorem}\label{thm:fe12}
A simplicial surface $X$ is edge-transitive with $\fe(X)=(1,2)$ if and only if $X$ is face-transitive with $\vf(X)=(1,3)$. 
\end{theorem}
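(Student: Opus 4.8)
The plan is to mirror the structure of the proof of Theorem~\ref{thm:fe14}, using the orbit--stabiliser theorem together with the fact that the stabiliser of a face embeds into $\Sym(3)$ (i.e.\ \cite[Lemma 3.7]{akpanya2025censusfacetransitivesurfaces}). First I would assume $X$ is edge-transitive with $\fe(X)=(1,2)$; then $X$ is face-transitive by definition of the face-edge type, so computing the order of a face-stabiliser via $\vert\stab_{\Aut(X)}(F)\vert\cdot\vert X_2\vert=\vert\Aut(X)\vert=2\cdot\vert X_1\vert=2\cdot\tfrac32\cdot\vert X_2\vert=3\cdot\vert X_2\vert$ gives $\vert\stab_{\Aut(X)}(F)\vert=3$, hence $\vf(X)=(1,3)$.

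For the converse I would start from a face-transitive surface $X$ with $\vf(X)=(1,3)$. The key point is that a face-stabiliser $\stab_{\Aut(X)}(F)$ of order $3$, viewed inside $\Sym(X_1(F))\cong\Sym(3)$ (equivalently acting on the three vertices $X_0(F)$), must be the cyclic group generated by a $3$-cycle, since that is the only subgroup of $\Sym(3)$ of order $3$. Therefore this stabiliser acts transitively on the three edges $X_1(F)$; combined with face-transitivity this shows $\Aut(X)$ acts transitively on $X_1$, so $X$ is edge-transitive. To pin down the second entry of $\fe(X)$, I would again apply orbit--stabiliser to an edge $e\in X_1$: $\vert\stab_{\Aut(X)}(e)\vert\cdot\vert X_1\vert=\vert\Aut(X)\vert=3\cdot\vert X_2\vert=3\cdot\tfrac23\cdot\vert X_1\vert=2\cdot\vert X_1\vert$, whence $\vert\stab_{\Aut(X)}(e)\vert=2$ and $\fe(X)=(1,2)$.

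The only genuinely non-routine step is justifying that an order-$3$ face-stabiliser acts transitively on the edges of its face; everything else is a direct counting argument once one knows $\vert X_1\vert=\tfrac32\vert X_2\vert$ (every face has three edges, every edge lies in two faces) and that face-stabilisers embed in $\Sym(3)$. I would make the transitivity step precise by noting that a subgroup of $\Sym(3)$ of order $3$ is generated by an element of order $3$, which is necessarily a $3$-cycle on $\{v_1,v_2,v_3\}=X_0(F)$ and hence also cyclically permutes the three opposite edges $X_1(F)$; one should take care that an automorphism fixing $F$ is determined by its action on $X_0(F)$, which follows from the same uniqueness principle used in the proof of \Cref{lem:C2C2}. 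I expect this verification to be the main (albeit mild) obstacle, analogous in spirit to the $\vert\stab\vert=12$ contradiction in the proof of \Cref{theorem:invariants2}.
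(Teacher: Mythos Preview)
Your proposal is correct and follows essentially the same argument as the paper's own proof: both directions are handled by the orbit--stabiliser theorem together with the observation that an order-$3$ face-stabiliser is cyclic and hence transitive on $X_1(F)$. Your only addition is the explicit justification (via the embedding into $\Sym(3)$) of why this stabiliser is cyclic, which the paper simply asserts.
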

\begin{proof}
Let $X$ be edge-transitive with $\fe(X)=(1,2).$ Thus, $X$ is face-transitive and it remains to show that the face-stabilisers of $X$ all have order $3.$ By observing 
\[
\vert\stab_{\Aut(X)}(F)\vert\cdot \vert X_2\vert=\vert \Aut(X)\vert =2\cdot\vert X_1\vert=2\cdot \tfrac{3}{2}\cdot \vert X_2\vert= 3\cdot \vert X_2\vert,
\]
where $F\in X_2$ is an arbitrary face, we conclude $\vf(X)=(1,3)$.
Now, let $X$ be a face-transitive surface with $\vf(X)=(1,3)$. In this case, the stabiliser of a face $F\in X_2$ is cyclic of order $3$ and hence transitive on the edges $X_1(F).$ This implies that $\Aut(X)$ is transitive on $X_1.$ By the orbit-stabiliser theorem, we obtain
\[
\vert\stab_{\Aut(X)}(e)\vert\cdot \vert X_1\vert=\vert \Aut(X)\vert =3\cdot\vert X_2\vert=3\cdot \tfrac{2}{3}\cdot \vert X_1\vert= 2\cdot \vert X_1\vert
\]
for an edge $e\in X_1.$ Thus, $\fe(X)=(1,2)$ follows.
 \end{proof}
Hence, there are exactly two types of edge-transitive surfaces, namely edge-transitive surfaces $X$ with $H=\lambda_X(\Aut(X))$ forming a $(1,3)$-group of $\F(X)$ of type $1$ or type $2$, see \cite[Section 4.5]{akpanya2025censusfacetransitivesurfaces}.
This means that if $H$ is of type $1$, then there exists an edge $\{F_1,F_2\}$ in $\mathcal{F}(X)$ and an automorphism $\sigma\in H$ such that the CDC corresponding to $X$ can be constructed as the $H$-orbit of the $\alpha$-cycle $\alpha(\sigma,F_1,F_2).$ Moreover, if $H$ is of type $2$, then there exist edges $\{F_1,F_2\},\{F_2,F_3\}$ in $\mathcal{F}(X)$ and an automorphism $\sigma'\in H$ such that $X$ arises from the CDC $\alpha(\sigma',F_1,F_2,F_3).$

\subsection{Edge-transitive surfaces with face-edge type (2,2)}\label{subsection:fe(2,2)}
We proceed by focusing on edge-transitive surfaces with face-edge types equal to $(2,2).$ In order to describe the corresponding CDCs in the face graphs of these surfaces, we introduce the definition of a $(2,2)$-group of a given edge-transitive cubic graph.
\begin{definition}\label{def:e(1,4)group}
  Let $\Gamma=(V,E)$ be an edge-transitive cubic graph and $H\leq \Aut(\Gamma)$ such that
  \begin{enumerate}
    \item $H$ acts transitively on $E$ with $\vert  H\vert =2\cdot\vert E\vert,$
    \item the action of $H$ on $V$ yields exactly two $H$-orbits,
    \item for $F_1,F_2,F_3\in V$ with $\{F_1,F_2\},\{F_2,F_3\} \in E,$ there exists $\sigma\in H$ with $\sigma(F_1)=F_3$.
  \end{enumerate}
  If $\alpha(\sigma,F_1,F_2,F_3)^H$  is a vertex-faithful cycle double cover of $\Gamma$, then we say that $H$ is a \emph{\bf${(2,2)}$-group of}\index{${(2,2)}$-group} $\Gamma$. We denote the above cycle double cover by $\C^{{(2,2)}}(H)$.
\end{definition}
The above definition allows us to characterise the desired edge-transitive surfaces.
\begin{theorem}\label{thm:fe21}
 A simplicial surface $X$ is an edge-transitive surface with $\fe(X)=(2,2)$ if and only if $H:=\lambda_X(\Aut(X))$ is a ${(2,2)}$-group of $\F(X)$ and $\{u(v)\mid v\in X_0\}=\C^{{(2,2)}}(H)$, where the umbrellas of $X$ are interpreted as cycles in $\mathcal{F}(X)$. 
\end{theorem}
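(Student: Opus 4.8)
The plan is to prove both implications by translating between the surface $X$ and its face graph $\F(X)$ via the monomorphism $\lambda_X$, using the umbrellas of $X$ as the cycles of the CDC, exactly as in the earlier subsections but now tracking the two vertex-orbit/two face-orbit structure forced by $\fe(X)=(2,2)$.

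First I would prove the forward direction. Suppose $X$ is edge-transitive with $\fe(X)=(2,2)$. Set $H:=\lambda_X(\Aut(X))\leq\Aut(\F(X))$; since $\lambda_X$ is a monomorphism, $|H|=|\Aut(X)|$. By edge-transitivity and the orbit--stabiliser theorem, $|\Aut(X)|=|\stab_{\Aut(X)}(e)|\cdot|X_1|=2\cdot\frac32|X_2|=3|X_2|=2|E(\F(X))|$, since $\F(X)$ has $|X_2|$ vertices and hence $\frac32|X_2|=|X_1|$ edges. This verifies condition~(1) of Definition~\ref{def:e(1,4)group} (the transitivity of $H$ on $E$ is Lemma~\ref{lemma:arctransitive} together with the fact that $\lambda_X$ is onto its image and intertwines the two actions on edges). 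For condition~(2): the action of $\Aut(X)$ on $X_2$ has exactly two orbits because $\fe(X)=(2,2)$ means $|{X_2}^{\Aut(X)}|=2$, and $\lambda_X$ conjugates this action to the $H$-action on $V(\F(X))=X_2$, so $H$ has exactly two orbits on $V$. For condition~(3): given $\{F_1,F_2\},\{F_2,F_3\}\in E(\F(X))$, these correspond to two edges $e_{12},e_{23}\in X_1(F_2)$ of the common face $F_2$; I need an automorphism of $X$ fixing $F_2$ and sending $e_{12}$ to $e_{23}$. Here is where I invoke the structure theorem \cite[Lemma 3.7]{akpanya2025censusfacetransitivesurfaces}: with $\fe(X)=(2,2)$ we computed in the proof of Theorem~\ref{theorem:invariants2} that $|\stab_{\Aut(X)}(F)|=3$ for each face $F$ (the orbit--stabiliser count $|\stab_{\Aut(X)}(F)|\cdot\frac{|X_2|}{2}=6|X_2|$ gives $3$), and a subgroup of order $3$ of $\Sym(X_1(F))\cong S_3$ is generated by a $3$-cycle, hence transitive on the three edges of $F$; pulling back through $\lambda_X$ gives the required $\sigma\in H$ with $\sigma(F_1)=F_3$. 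Finally, $\{u(v)\mid v\in X_0\}$ is a vertex-faithful CDC of $\F(X)$ (this is the standard correspondence recalled in Section~\ref{sec:prelim}: each umbrella is a cycle in $\F(X)$, each face of $X$ lies in exactly the two umbrellas of its two non-opposite... i.e.\ of the vertices of $X$ it contains that are... — more precisely each vertex of $\F(X)$, being a face $F$ of $X$, lies on exactly the umbrellas $u(v)$ for the three $v\in X_0(F)$, but each \emph{edge} $\{F,F'\}$ of $\F(X)$, being an edge $e\in X_1$ of $X$, lies on exactly the two umbrellas $u(v_1),u(v_2)$ with $X_0(e)=\{v_1,v_2\}$), and it is invariant under $H=\lambda_X(\Aut(X))$ since automorphisms of $X$ permute umbrellas; running $\alpha(\sigma,F_1,F_2,F_3)$ through the $H$-orbit recovers precisely this umbrella set, so $H$ is a $(2,2)$-group and $\C^{(2,2)}(H)=\{u(v)\mid v\in X_0\}$.

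For the converse, suppose $H:=\lambda_X(\Aut(X))$ is a $(2,2)$-group of $\F(X)$ and $\{u(v)\mid v\in X_0\}=\C^{(2,2)}(H)$. Since $\C^{(2,2)}(H)=\alpha(\sigma,F_1,F_2,F_3)^H$ is by definition a vertex-faithful CDC, its cycles are the umbrellas of $X$, so $\F(X)$ is indeed the face graph of $X$ and the identification is consistent. Condition~(1), $|H|=2|E(\F(X))|=3|X_2|$, gives $|\Aut(X)|=3|X_2|$. I then need edge-transitivity of $X$: because $\C^{(2,2)}(H)$ is $H$-invariant and $H$ acts transitively on $E(\F(X))=$ the edges of $X$, the group $\lambda_X^{-1}(H)=\Aut(X)$ acts transitively on $X_1$; strictly one should check that the $H$-action on $\F(X)$ preserving the umbrella-CDC lifts to an action on $X$ by automorphisms, which it does since an automorphism of $\F(X)$ preserving the CDC preserves all vertex--edge--face incidences of $X$. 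Then the orbit--stabiliser theorem gives $|\stab_{\Aut(X)}(e)|=|\Aut(X)|/|X_1|=3|X_2|/(\tfrac32|X_2|)=2$, i.e.\ the second coordinate of $\fe(X)$ is $2$; and condition~(2), two $H$-orbits on $V(\F(X))=X_2$, gives $|{X_2}^{\Aut(X)}|=2$, the first coordinate. Hence $\fe(X)=(2,2)$. (One also wants the vertex--edge incidences of $X$ to form a simple graph, part of the definition of edge-transitive surface; this follows from vertex-faithfulness of the CDC, which guarantees no two umbrellas share two edges and hence no doubled edges/loops in the vertex--edge structure — this is exactly the point of the "vertex-faithful" hypothesis.)

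The main obstacle I anticipate is condition~(3) and the bookkeeping around it in the forward direction: one must be careful that the order-$3$ face-stabiliser really is \emph{cyclic} and acts as a single $3$-cycle on the three edges of the face (so that it supplies the required $\sigma$ with $\sigma(F_1)=F_3$ for the \emph{specific} cyclic ordering $F_1,F_2,F_3$ around $F_2$ coming from the umbrella), rather than merely being \emph{a} subgroup of order $3$ of $S_3$; and dually, in the converse, one must check that the $\alpha$-cycle through $F_1,F_2,F_3$ closes up into exactly one umbrella and that the $H$-orbit of it produces each umbrella exactly once, with no two umbrellas coinciding — this is where the hypothesis $|H|=2|E|$ and the vertex-faithfulness are used together to count $|\C^{(2,2)}(H)|=|X_0|$ correctly. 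The remaining steps are the routine orbit--stabiliser arithmetic already rehearsed in Theorems~\ref{thm:fe14} and~\ref{thm:fe12}.
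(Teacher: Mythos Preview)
There is a genuine gap in the forward direction, and it sits exactly at the point you flag as ``the main obstacle'' but for a different reason than you suspect.

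First, a small arithmetic slip: with $\fe(X)=(2,2)$ one has $|\Aut(X)|=2|X_1|=3|X_2|$, and since each face-orbit has size $|X_2|/2$ the face stabiliser has order $6$, not $3$. (Your displayed equation $|\stab|\cdot\tfrac{|X_2|}{2}=6|X_2|$ would give $12$, which is the computation from the $(2,4)$ case of Theorem~\ref{theorem:invariants2}, not the $(2,2)$ case.)

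The real problem is this: any $\sigma$ you extract from $\stab_{\Aut(X)}(F_2)$ necessarily satisfies $\sigma(F_2)=F_2$. Plugging such a $\sigma$ into Definition~\ref{def:alpha} with $n=3$ gives a sequence $(\sigma(F_1),\sigma(F_2),\sigma^2(F_1),\sigma^2(F_2),\ldots)$ in which $\sigma^i(F_2)=F_2$ for every $i$, so the entries are not pairwise distinct and $\alpha(\sigma,F_1,F_2,F_3)=()$. Hence your $\sigma$ verifies condition~(3) of Definition~\ref{def:e(1,4)group} but cannot possibly produce the vertex-faithful CDC $\C^{(2,2)}(H)$; the sentence ``running $\alpha(\sigma,F_1,F_2,F_3)$ through the $H$-orbit recovers precisely this umbrella set'' is simply false for this choice of $\sigma$.

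The paper avoids this by building $\sigma$ differently. It first shows that the non-trivial element of an edge stabiliser must fix \emph{both} incident faces (otherwise $X$ would be face-transitive) and therefore swaps the two vertices of that edge. It then picks a vertex $v$ with umbrella $(F_1,\ldots,F_n)$, uses edge-transitivity to send $e_1$ to $e_3$, and composes with the edge-stabiliser involution at $e_3$ if necessary to force the resulting automorphism to fix $v$. The face-$2$-colouring (coming from the two $\Aut(X)$-orbits on $X_2$) pins down that $F_1\mapsto F_3$ and $F_2\mapsto F_4$, so this $\sigma$ rotates the umbrella by two steps and does \emph{not} fix $F_2$; now $\alpha(\sigma,F_1,F_2,F_3)$ is the umbrella of $v$ and its $H$-orbit is the full CDC. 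Your converse direction and the orbit-counting bookkeeping are fine and essentially match the paper.
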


\begin{proof} 
First, let $X$ be an edge-transitive surface with $\fe(X)=(2,2)$. Thus, $\vert \stab_{\Aut(X)}(e)\vert =2$ for all $e\in X_1$ and $\Aut(X)$ has exactly two orbits on $X_2$. 
Moreover, if $F\in X_2$ is a face, then $\vert \stab_{\Aut(X)}(F)\vert=6$ by the orbit-stabiliser theorem.
In order to prove the result, let $e\in X_1$ be an edge, $\phi\in \stab_{\Aut(X)}(e)$ non-trivial automorphism and $F\in X_2(e)$. 
First, we argue that $\phi$ stabilises $F.$ To prove this statement, we assume the contrary, namely $\phi(F)\neq F.$ Further, let $F'\in X_2$ be an arbitrary face and $e'\in X_1(F')$ an edge. Since $X$ is edge-transitive, there exists an automorphism $\psi\in \Aut(X)$ that maps $e$ onto $e'.$ Note, the non-trivial automorphism $\phi'\in \stab_{\Aut(X)}(e')$ does not stabilise $F'.$ Thus, either $\psi$ or $\phi'\circ \psi$ maps $F$ onto $F'.$ Since $F'$ was arbitrary, $X$ is face-transitive, which contradicts $\fe(X)=(2,2).$

Hence, $\phi$ stabilises $F$. Because $\Aut(X)$ has exactly two orbits on $X_2$, there exists a face-$2$-colouring of $X$ with colour classes corresponding to the $\Aut(X)$-orbits on $X_2$. This helps us to prove that $H:=\lambda_X(\Aut(X))$ is a ${(2,2)}$-group of $\F(X)$ as follows:  
Let $v\in X_0$ be a vertex and $n:=\deg(v)$. Furthermore, for every $1\leq i \leq n$ let $F_i$ be a face and $e_i$ an edge with $v\in X_0(F_i)\cap X_0(e_i).$  More precisely, for every $1\leq i \leq n$ we assume $X_2(e_i)=\{F_i,F_{i+1}\}$, where we read the subscripts modulo~$n,$ see Figure~\ref{fig:ve14} for illustration.

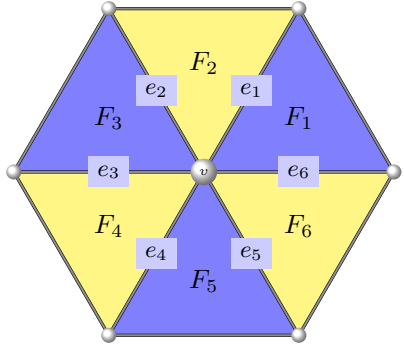
\begin{figure}[H]
    \centering
\scalebox{1.}{
\begin{tikzpicture}[vertexBall, edgeDouble, faceStyle, scale=2.5]

\coordinate (V) at (0., 0.);
\coordinate (V1) at (1., 0.);
\coordinate (V2) at (0.4999999999999999, 0.8660254037844386);
\coordinate (V3) at (-0.5000000000000001, 0.8660254037844386);
\coordinate (V4) at (-1, 0.);
\coordinate (V5) at (-0.5, -0.8660);
\coordinate (V6) at (0.5, -0.8660);

\fill[face=blue!50]  (V) -- (V1) -- (V2) -- cycle;
\fill[face]  (V) -- (V3) -- (V2) -- cycle;
\fill[face=blue!50]  (V) -- (V3) -- (V4) -- cycle;
\fill[face]  (V) -- (V4) -- (V5) -- cycle;
\fill[face=blue!50]  (V) -- (V5) -- (V6) -- cycle;
\fill[face]  (V) -- (V1) -- (V6) -- cycle;

\node[faceLabel] at (barycentric cs:V=1,V1=1,V2=1) {$F_1$};
\node[faceLabel] at (barycentric cs:V=1,V2=1,V3=1) {$F_2$};
\node[faceLabel] at (barycentric cs:V=1,V3=1,V4=1) {$F_3$};
\node[faceLabel] at (barycentric cs:V=1,V4=1,V5=1) {$F_4$};
\node[faceLabel] at (barycentric cs:V=1,V5=1,V6=1) {$F_5$};
\node[faceLabel] at (barycentric cs:V=1,V1=1,V6=1) {$F_6$};

\draw[edge] (V) -- node[edgeLabel] {$e_6$}(V1);
\draw[edge] (V) --node[edgeLabel] {$e_1$}(V2);
\draw[edge] (V) --node[edgeLabel] {$e_2$}(V3);
\draw[edge] (V) --node[edgeLabel] {$e_3$}(V4);
\draw[edge] (V) --node[edgeLabel] {$e_4$}(V5);
\draw[edge] (V) --node[edgeLabel] {$e_5$}(V6);

\draw[edge] (V1) --(V2);
\draw[edge] (V3) --(V2);
\draw[edge] (V3) --(V4);
\draw[edge] (V4) --(V5);
\draw[edge] (V5) --(V6);
\draw[edge] (V1) --(V6);
\vertexLabelR{V1}{left}{$ $}
\vertexLabelR{V2}{left}{$ $}
\vertexLabelR{V3}{left}{$ $}
\vertexLabelR{V4}{left}{$ $}
\vertexLabelR{V5}{left}{$ $}
\vertexLabelR{V6}{left}{$ $}
\vertexLabelR{V}{left}{$v$}

\end{tikzpicture}}
    \caption{The umbrella $u(v)$ of $X$ with $\Aut(X)$-orbits on $X_2$ coloured in blue and yellow}
    \label{fig:ve14}
\end{figure}
Thus, the umbrella of $v$ is given by $u(v)=(F_1,\ldots,F_n)$. We know that $n$ is even and hence we define $k:=\tfrac{n}{2}$.
In the following we aim to construct an automorphism that applies a cyclic shift to the faces that are incident to $v.$ For this, let $\psi_1$ be an automorphism that maps $e_1$ onto $e_3$ and $\psi_2 \in \stab_{\Aut(X)}(e_3)$ non-trivial. Note that $\psi_1$ maps $F_1$ onto $F_3$. Further, we observe that either $\psi_1$ or $\psi_2 \circ \psi_1$ stabilises the vertex $v.$ For simplicity, we assume that $\psi_1$ stabilises $v.$
Because of $\psi_1(F_n)=F_2$ and $\psi_1(F_1)=F_3$, we obtain
\[ 
(F_1,\ldots,F_n)=(\psi_1(F_1),\psi_1(F_2),\ldots,\psi_1^{k}(F_1),\psi_1^{k}(F_2)).\]
Interpreting $(F_1,\ldots,F_n)$ as a cycle of $\F(X)$ and defining the automorphism $\sigma:=\lambda_X(\psi_1)$ yields  $(F_1,\ldots,F_n)=\alpha(\sigma,F_1,F_2,F_3)$ being a $\sigma$-induced $\alpha$-cycle.
Since all the vertices lie in exactly one $\Aut(X)$-orbit (see Proposition~\ref{lemma:onevertexorbit}), the cycle double cover corresponding to $X$ is given by $\alpha(\sigma,F_1,F_2,F_3)^H$ and therefore $H$ forms a ${(2,2)}$-group of $\F(X)$.

Next, let $H:=\lambda_X(\Aut(X))$ be a ${(2,2)}$-group of $\F(X)=(V,E)$ and $e\in X_1$ is an arbitrary edge. The statement $\Aut(X)\cong H$ implies that $X$ is edge-transitive and that the stabiliser $\stab_H(e)$ of $e$ interpreted as an edge of $\F(X)$ is isomorphic to the stabiliser $\stab_{\Aut(X)}(e)$ of $e$ interpreted as an edge of $X$ with $\vert \stab_{\Aut(X)}(e)\vert =2$. 
Moreover, $H$ having exactly two orbits on $V=X_2$ translates directly into $\Aut(X)$ having exactly two orbits on $X_2.$ 
This implies that $X$ is an edge-transitive surface with $\vf(X)=(2,2)$.
\end{proof}

\begin{corollary}\label{corollary:constructionfe22}
    If $\Gamma$ is an edge-transitive cubic graph and $H\leq \Aut(\Gamma)$ a ${(2,2)}$-group of $\Gamma$, then $\Gamma$ is the face graph of an edge-transitive surface.
\end{corollary}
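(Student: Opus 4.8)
The plan is to derive this as a direct consequence of Theorem~\ref{thm:fe21}. The statement to prove, Corollary~\ref{corollary:constructionfe22}, asserts that whenever $\Gamma$ is an edge-transitive cubic graph carrying a $(2,2)$-group $H\leq\Aut(\Gamma)$, then $\Gamma$ is the face graph of some edge-transitive surface. Theorem~\ref{thm:fe21} already gives a biconditional relating edge-transitive surfaces of face-edge type $(2,2)$ to face graphs equipped with $(2,2)$-groups, so the bulk of the work is to invoke the ``if'' direction of that biconditional. The one thing that needs a small argument is that the data $(\Gamma,H)$ actually assembles into a simplicial surface in the first place, which is exactly where the vertex-faithfulness requirement built into Definition~\ref{def:e(1,4)group} comes in.

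First I would unpack the hypothesis: by Definition~\ref{def:e(1,4)group}, since $H$ is a $(2,2)$-group of $\Gamma$, there exist $\sigma\in H$ and adjacent-triple vertices $F_1,F_2,F_3\in V$ (with $\{F_1,F_2\},\{F_2,F_3\}\in E$ and $\sigma(F_1)=F_3$) such that $\alpha(\sigma,F_1,F_2,F_3)^H$ is a \emph{vertex-faithful} cycle double cover of $\Gamma$, denoted $\C^{(2,2)}(H)$. Now I would appeal to the fact recalled in \Cref{sec:prelim}: if a cubic graph has a vertex-faithful cycle double cover, then it is the face graph of a simplicial surface $X$ whose vertices correspond to the cycles of the CDC. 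So set $X$ to be this simplicial surface; concretely $X_2=V$, the edges of $X$ are the pairs $X_2(e')$ for edges $e'$ of $\Gamma$, and the vertices of $X$ are the cycles of $\C^{(2,2)}(H)$, whose umbrellas (read as cycles in $\F(X)=\Gamma$) are precisely the members of $\C^{(2,2)}(H)$.

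Next I would verify that this $X$ meets the hypotheses of the ``if'' direction of Theorem~\ref{thm:fe21}. We need $H=\lambda_X(\Aut(X))$ and $\{u(v)\mid v\in X_0\}=\C^{(2,2)}(H)$. The second equality holds by construction of $X$ from the CDC. For the first, one uses that $\lambda_X:\Aut(X)\to\Aut(\F(X))$ is a monomorphism (Equation~(\ref{eq:lambdaX})), so $\lambda_X(\Aut(X))$ is a subgroup of $\Aut(\Gamma)$ isomorphic to $\Aut(X)$; since the cycle set $\C^{(2,2)}(H)$ is $H$-invariant, every element of $H$ permutes the umbrellas of $X$ and hence lifts to an automorphism of $X$, giving $H\leq\lambda_X(\Aut(X))$, while conversely every automorphism of $X$ descends to one of $\Gamma$ that must preserve the distinguished CDC, and the $(2,2)$-group axioms (transitivity on $E$ with $|H|=2|E|$, two vertex-orbits) pin down $\lambda_X(\Aut(X))$ to be exactly $H$. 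Once $X$ satisfies the right-hand side of Theorem~\ref{thm:fe21}, that theorem immediately yields that $X$ is an edge-transitive surface (with $\fe(X)=(2,2)$), and in particular $\Gamma=\F(X)$ is the face graph of an edge-transitive surface, as claimed.

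The step I expect to be the main obstacle — or at least the one requiring care rather than being fully routine — is the identification $H=\lambda_X(\Aut(X))$: one inclusion is essentially the construction, but the reverse inclusion $\lambda_X(\Aut(X))\leq H$ needs that no ``extra'' symmetry of $X$ can appear beyond those prescribed by $H$. This can be handled by the same orbit-stabiliser bookkeeping used in Theorem~\ref{thm:fe21}'s proof: an edge-stabiliser in $\Aut(X)$ embeds into $C_2\times C_2$ by Lemma~\ref{lem:C2C2} and the $(2,2)$-group conditions force $|\lambda_X(\Aut(X))|=2|E|=|H|$ together with $H\leq\lambda_X(\Aut(X))$, whence equality. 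Alternatively, if one is content to prove only the weaker assertion literally stated (``$\Gamma$ \emph{is} the face graph of \emph{an} edge-transitive surface''), it suffices to exhibit one such surface, and then the cleanest route is simply: vertex-faithful CDC $\Rightarrow$ simplicial surface $X$; then check $X$ is edge-transitive directly from the transitivity of $H$ on $E=X_1$ lifted through $\lambda_X^{-1}$, sidestepping the precise computation of $\lambda_X(\Aut(X))$ altogether.
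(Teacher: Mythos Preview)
Your fallback route at the very end is the correct one and is precisely how the paper (which states the corollary without proof) intends it to follow: the $(2,2)$-group hypothesis guarantees a vertex-faithful CDC, hence a simplicial surface $X$ with $\F(X)=\Gamma$; since $H$ preserves this CDC it embeds into $\Aut(X)$ via $\lambda_X^{-1}$, and edge-transitivity of $H$ on $\Gamma$ then gives edge-transitivity of $\Aut(X)$ on $X_1$. Nothing more is needed, because the corollary only asserts that \emph{some} edge-transitive surface has $\Gamma$ as face graph.

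Your primary approach, however, has a genuine gap. You try to invoke the ``if'' direction of Theorem~\ref{thm:fe21}, which requires $H=\lambda_X(\Aut(X))$, and your argument for the inclusion $\lambda_X(\Aut(X))\leq H$ does not go through. You claim that ``the $(2,2)$-group axioms \ldots\ pin down $\lambda_X(\Aut(X))$ to be exactly $H$'' and that these conditions ``force $|\lambda_X(\Aut(X))|=2|E|$'', but the $(2,2)$-group axioms are hypotheses on $H$, not on $\lambda_X(\Aut(X))$; they say nothing about whether a \emph{larger} subgroup of $\Aut(\Gamma)$ might also preserve $\C^{(2,2)}(H)$. In fact this can happen: the surface $X$ built from $\C^{(2,2)}(H)$ may well have extra symmetry, so that $\lambda_X(\Aut(X))$ strictly contains $H$ and $X$ is face-transitive with $\fe(X)\in\{(1,2),(1,4)\}$ rather than $(2,2)$. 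The corollary is still true in that case (the surface is edge-transitive either way), but Theorem~\ref{thm:fe21} as stated does not apply. So drop the attempt to force $H=\lambda_X(\Aut(X))$ and lead with your final paragraph instead.
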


By using Corollary~\ref{corollary:constructionfe22}, we are able to construct a simplicial surface $Y:=X^{(2,2)}$ with $\fe(Y)=(2,2)$.
This simplicial surface $Y$ consists of $36$ vertices (all of vertex-degree $12$), $216$ edges, and $144$ faces. The faces of $Y$ can be found in \Cref{a1}.
We observe that $Y$ is orientable, and satisfies $\chi(Y)=-36$ and
$$\Aut(Y)\cong \Aut(\F(Y))\cong (((C_3\times C_3)\rtimes Q_8)\rtimes C_3)\rtimes C_2.$$
We observe that $Y$ is a minimal edge-transitive surface with $\fe(Y)=(2,2)$ with respect to the number of faces. 

\subsection{Edge-transitive surfaces with face-edge type (2,1)}\label{subsection:fe(2,1)}
Finally, we examine the structure of an edge-transitive surface with face-edge 
type $(2,1)$.  If $X$ is an edge-transitive surface with $\fe(X)=(2,1)$, we prove that the cycles of the CDC in $\mathcal{F}(X)$  corresponding to $X$ form automorphism-induced $\alpha$-cycles. 
Before proceeding further, we give the definition of a ${(2,1)}$-group of a given edge-transitive cubic graph.

\begin{definition}
  \label{def:ef21group}
  Let $\Gamma=(V,E)$ be an edge-transitive cubic graph and $H\leq \Aut(\Gamma)$ such that
  \begin{enumerate}
    \item $H$ acts transitively on $E$ with $\vert  H\vert =1\cdot\vert E\vert,$
    \item the action of $H$ on $V$ yields exactly two orbits,
    \item for $F_1,F_2,F_3\in V$ with $\{F_i,F_{i+1}\} \in E$ for $i=1,2$ there exists $\sigma\in H$ with $\sigma(F_1)=F_3$.
  \end{enumerate}
  If the orbit $\alpha(\sigma,F_1,F_2,F_3)^H$ is a vertex-faithful cycle double cover of $\Gamma$, then we say that $H$ is a \define{${(2,1)}$-group} of $\Gamma$. We denote the above cycle double cover by $\C^{{(2,1)}}(H)$.
\end{definition}
This enables us to formulate the desired description of an edge-transitive surface $X$ satisfying $\fe(X)=(2,1)$.
\begin{theorem}
 A simplicial surface $X$ is an edge-transitive surface with $\fe(X)=(2,1)$ if and only if $H:=\lambda_X(\Aut(X))$ is a ${(2,1)}$-group of $\mathcal{F}(X)$ and $\{u(v)\mid v\in X_0\}=\C^{{(2,1)}}(H)$, where the umbrellas of $X$ are interpreted as cycles in $\mathcal{F}(X)$.
\end{theorem}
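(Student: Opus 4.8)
The plan is to mirror the proof of Theorem~\ref{thm:fe21}, bearing in mind that here the edge stabilisers are \emph{trivial}, so the device of correcting a chosen automorphism by a non-trivial edge-stabiliser element is unavailable. The reverse direction is the short one. If $H:=\lambda_X(\Aut(X))$ is a $(2,1)$-group of $\F(X)=(V,E)$, then $\Aut(X)\cong H$ since $\lambda_X$ is a monomorphism, and the equivariant bijection $e\mapsto X_2(e)$ identifies the $\Aut(X)$-action on $X_1$ with the $H$-action on $E$ and the $\Aut(X)$-action on $X_2$ with the $H$-action on $V$. Condition~(1) of Definition~\ref{def:ef21group} then forces $\Aut(X)$ to be transitive on $X_1$ with $|\stab_{\Aut(X)}(e)|=1$, and condition~(2) gives exactly two $\Aut(X)$-orbits on $X_2$; hence $X$ is edge-transitive with $\fe(X)=(2,1)$. (The hypothesis $\{u(v)\mid v\in X_0\}=\C^{(2,1)}(H)$ is not even needed here.)

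For the forward direction, assume $X$ is edge-transitive with $\fe(X)=(2,1)$. First I would record the relevant orders: the orbit--stabiliser theorem gives $|\Aut(X)|=|X_1|=\tfrac32|X_2|$, so a face stabiliser has order $3$, and since it embeds into a symmetric group of order $6$ (\cite[Lemma 3.7]{akpanya2025censusfacetransitivesurfaces}) it is cyclic of order $3$ and hence transitive on the three edges of its face. By Proposition~\ref{lemma:onevertexorbit}, $\Aut(X)$ is transitive on $X_0$, and the count $\deg(v)\cdot|X_0|=3|X_2|$ gives $|\stab_{\Aut(X)}(v)|=\deg(v)/2$. Because $\Aut(X)$ has two face-orbits, $\F(X)$ is bipartite with the two orbits as colour classes --- transitivity of $\Aut(X)$ on the edges of $\F(X)$, together with the invariance of the orbits, rules out an edge joining two faces of the same orbit --- so every umbrella is an \emph{even} cycle of $\F(X)$. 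Fix $v\in X_0$, set $n:=\deg(v)=2k$, and write $u(v)=(F_1,\dots,F_n)$ with edges $e_1,\dots,e_n$ satisfying $X_2(e_i)=\{F_i,F_{i+1}\}$ and $v\in X_0(e_i)$, as in Figure~\ref{fig:ve14}.

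The crux --- and what I expect to be the main obstacle --- is to produce an automorphism fixing $v$ that shifts the umbrella by two. An automorphism fixing $v$ acts on the link of $v$ (a $2n$-cycle alternating faces and edges) as a colour-preserving element of its dihedral symmetry group $D_n$; a colour-preserving reflection fixes either two faces (impossible, since it has order $2$ while face stabilisers are $\cong C_3$) or two edges (impossible, since edge stabilisers are trivial). Hence $\stab_{\Aut(X)}(v)$ consists only of rotations and is cyclic; being of order $n/2$ it is generated by the rotation $\psi$ with $\psi(F_i)=F_{i+2}$, $\psi(e_i)=e_{i+2}$, of order $k$. Putting $\sigma:=\lambda_X(\psi)\in H$ and reading $F_1,\dots,F_n$ as vertices of $\F(X)$, we get $\sigma(F_1)=F_3$ and, by Definition~\ref{def:alpha}, $\alpha(\sigma,F_1,F_2,F_3)=(\sigma(F_1),\sigma(F_2),\dots,\sigma^{k}(F_1),\sigma^{k}(F_2))=(F_3,F_4,\dots,F_n,F_1,F_2)$, which as a cycle is exactly $u(v)$; the non-degeneracy clause of Definition~\ref{def:alpha} holds because these are the $n=2k$ distinct faces $F_1,\dots,F_n$.

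It then remains to verify the defining properties of a $(2,1)$-group and the equality with $\C^{(2,1)}(H)$. Conditions~(1) and~(2) of Definition~\ref{def:ef21group} are precisely the translation of $\fe(X)=(2,1)$ used in the reverse direction; condition~(3) holds because for $F_1,F_2,F_3\in V$ with $\{F_1,F_2\},\{F_2,F_3\}\in E$ the two shared edges both lie on the common face $F_2$, and the cyclic stabiliser $\stab_{\Aut(X)}(F_2)$ carries one onto the other, hence $F_1$ onto $F_3$. Finally, automorphisms send umbrellas to umbrellas, $\phi(u(v))=u(\phi(v))$, so by transitivity of $\Aut(X)$ (equivalently $H$) on $X_0$ the $H$-orbit $\alpha(\sigma,F_1,F_2,F_3)^H=u(v)^H$ equals $\{u(w)\mid w\in X_0\}$; this is a cycle double cover of $\F(X)$ in which any two cycles meet in at most one edge (two umbrellas $u(v),u(w)$ can share only the single edge $\{v,w\}$ of $X$), i.e.\ a vertex-faithful CDC. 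Hence $H$ is a $(2,1)$-group of $\F(X)$ with $\C^{(2,1)}(H)=\{u(v)\mid v\in X_0\}$, which completes the proof.
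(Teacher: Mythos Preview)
Your proof is correct. Both directions match the paper's in spirit, but your forward direction obtains the key ``shift-by-two'' automorphism by a genuinely different argument.

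The paper builds this automorphism constructively: it picks $\phi_1$ sending $e_1\mapsto e_2$ and $\phi_2$ sending $e_2\mapsto e_3$, then tracks vertices to show that neither $\phi_1$ nor $\phi_2$ fixes $v$ (using triviality of the stabilisers of the edges $\{v_1,v_2\}$ and $\{v_2,v_3\}$), so that the composite $\phi_2\circ\phi_1$ does fix $v$ and effects $F_i\mapsto F_{i+2}$. Your route is instead structural: you compute $|\stab_{\Aut(X)}(v)|=n/2$ by orbit--stabiliser, embed this stabiliser faithfully into $D_n$ via its action on the umbrella, and rule out reflections (a reflection would stabilise either a face, impossible since face stabilisers are $C_3$ with no involutions, or an edge, impossible since edge stabilisers are trivial). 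What remains is the unique cyclic subgroup of order $n/2$ in the rotation group, generated by the shift-by-two. This is cleaner and more conceptual; the paper's argument is more hands-on and gives the automorphism explicitly. You also spell out two points the paper leaves implicit: condition~(3) of Definition~\ref{def:ef21group} for \emph{arbitrary} $F_1,F_2,F_3$ via the cyclic face stabiliser of $F_2$, and vertex-faithfulness of the resulting CDC (two umbrellas share at most the single edge of $X$ joining their centres). One cosmetic remark: the citation of \cite[Lemma~3.7]{akpanya2025censusfacetransitivesurfaces} is superfluous, since any group of order $3$ is already cyclic.
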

\begin{proof} 
Let $X$ be an edge-transitive surface that satisfies $\fe(X)=(2,1)$, i.e.\ $\vert \stab_{\Aut(X)}(e)\vert=1$ for all $e\in X_1$ and $\vert {X_2}^{\Aut(X)}\vert = 2$.
Furthermore, let $v\in X_0$ be a vertex and $n:=\deg(v)$. Since $\Aut(X)$ has exactly two orbits on $X_2$ forming the colour classes of a face-2-colouring, the vertex-degree of $v$ is even and we can define $k:=\tfrac{n}{2}$. Further, for every $1\leq i \leq n$ let $F_i\in X_2$ be a face and $e_i\in X_1$ an edge with $v\in X_0(F_i)\cap X_0(e_i)$. More precisely, for every $1\leq i \leq n$ we set $X_0(F_i)=\{v,v_i,v_{i+1}\}$ and $X_0(e_i)=\{v,v_i\}$, see Figure~\ref{fig:ve21} (Subscripts are read modulo $n$).

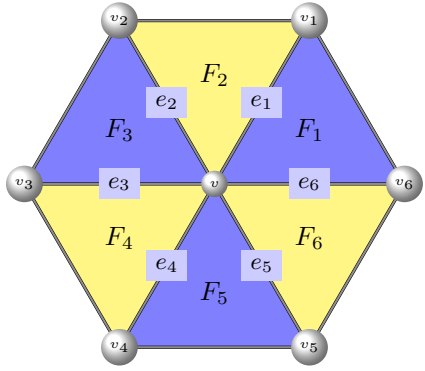
\begin{figure}[H]
    \centering
\scalebox{1.}{
\begin{tikzpicture}[vertexBall, edgeDouble, faceStyle, scale=2.5]

\coordinate (V) at (0., 0.);
\coordinate (V1) at (1., 0.);
\coordinate (V2) at (0.4999999999999999, 0.8660254037844386);
\coordinate (V3) at (-0.5000000000000001, 0.8660254037844386);
\coordinate (V4) at (-1, 0.);
\coordinate (V5) at (-0.5, -0.8660);
\coordinate (V6) at (0.5, -0.8660);

\fill[face=blue!50]  (V) -- (V1) -- (V2) -- cycle;
\fill[face]  (V) -- (V3) -- (V2) -- cycle;
\fill[face=blue!50]  (V) -- (V3) -- (V4) -- cycle;
\fill[face]  (V) -- (V4) -- (V5) -- cycle;
\fill[face=blue!50]  (V) -- (V5) -- (V6) -- cycle;
\fill[face]  (V) -- (V1) -- (V6) -- cycle;

\node[faceLabel] at (barycentric cs:V=1,V1=1,V2=1) {$F_1$};
\node[faceLabel] at (barycentric cs:V=1,V2=1,V3=1) {$F_2$};
\node[faceLabel] at (barycentric cs:V=1,V3=1,V4=1) {$F_3$};
\node[faceLabel] at (barycentric cs:V=1,V4=1,V5=1) {$F_4$};
\node[faceLabel] at (barycentric cs:V=1,V5=1,V6=1) {$F_5$};
\node[faceLabel] at (barycentric cs:V=1,V1=1,V6=1) {$F_6$};

\draw[edge] (V) -- node[edgeLabel] {$e_6$}(V1);
\draw[edge] (V) --node[edgeLabel] {$e_1$}(V2);
\draw[edge] (V) --node[edgeLabel] {$e_2$}(V3);
\draw[edge] (V) --node[edgeLabel] {$e_3$}(V4);
\draw[edge] (V) --node[edgeLabel] {$e_4$}(V5);
\draw[edge] (V) --node[edgeLabel] {$e_5$}(V6);

\draw[edge] (V1) --(V2);
\draw[edge] (V3) --(V2);
\draw[edge] (V3) --(V4);
\draw[edge] (V4) --(V5);
\draw[edge] (V5) --(V6);
\draw[edge] (V1) --(V6);
\vertexLabelR{V1}{left}{$v_6$}
\vertexLabelR{V2}{left}{$v_1$}
\vertexLabelR{V3}{left}{$v_2$}
\vertexLabelR{V4}{left}{$v_3$}
\vertexLabelR{V5}{left}{$v_4$}
\vertexLabelR{V6}{left}{$v_5$}
\vertexLabelR{V}{left}{$v$}

\end{tikzpicture}}
    \caption{The umbrella $u(v)$ of $X$ with $\Aut(X)$-orbits on the faces coloured in blue and yellow}
    \label{fig:ve21}
\end{figure}
Thus, $u(v)=(F_1,\ldots,F_n)$ forms the umbrella of $v$. 
As shown in the proof of Theorem~\ref{thm:fe21}, we seek an automorphism of $X$ that applies a cyclic shift to the faces $F_1,\ldots,F_n$. Let therefore $\phi_1$ be an automorphism that maps $e_1$ onto $e_2$ and $\phi_2 \in \Aut(X)$ an automorphism that maps $e_2$ onto $e_3.$ 
Since $\fe(X)=(2,1)$, we know that $\phi_1(F_2)\neq F_3$ holds. That means that $\phi_1(F_2)=F_2$. Moreover, since the edge-stabiliser of the edge $e\in X_1$ with $X_0(e)=\{v_1,v_2\}$ is trivial, $\phi_1$ cannot stabilise the vertex $v.$ Hence, $\phi_1(v)=v_2,\phi_1(v_1)=v$ and $\phi_1(v_2)=v_1$ hold. Additionally, we know that the edge $e'\in X_1$ with $X_0(e')=\{v_2,v_3\}$ satisfies $\vert \stab_{\Aut(X)}(e')\vert =1.$ Thus, the automorphism $\phi_2$ can also not stabilise $v$, and $\phi_2(v)=v_3,\phi_2(v_3)=v_2$ and $\phi_2(v_2)=v$ follows. This implies that $\phi:=\phi_2\circ \phi_1$ is an automorphism of $X$ satisfying $\phi(v)=v, \phi(v_1)=v_3$ and $\phi(F_1)=F_3$. By observing the incidence structure of $X,$ we obtain $\phi(F_i)=F_{i+2}$ for all $i=1,\ldots,n$, where we read the subscripts modulo $n.$
Thus, the umbrella of $v$ can be written as 
\[ 
(F_1,\ldots,F_n)=(\phi(F_1),\phi(F_2),\ldots,\phi^{k}(F_1),\phi^{k}(F_2)).
\]

If we interpret the umbrella $(F_1,\ldots,F_n)$ as a cycle of $\F(X)$ and define $\sigma:=\lambda_X(\phi)$, we observe that  $(F_1,\ldots,F_n)=\alpha(\sigma,F_1,F_2,F_3)$ is a $\sigma$-induced $\alpha$-cycle.
Since all the vertices lie in exactly one $\Aut(X)$-orbit (see Proposition~\ref{lemma:onevertexorbit}), the cycle double cover corresponding to $X$ is given by $\alpha(\sigma,F_1,F_2,F_3)^H$ and $H$ hence forms a ${(2,1)}$-group of $\F(X)$.

It remains to show that if the subgroup $H$ is a ${(2,1)}$-group of $\F(X)=(V,E)$, then $X$ is edge-transitive with $\fe(X)=(2,1)$.
Therefore, let $H$ be such a group and $e\in X_1$ an arbitrary edge. From $\Aut(X)\cong H$ we deduce that $X$ is edge-transitive and that the stabiliser $\stab_H(e)$ of $e$ interpreted as an edge of $\F(X)$ is isomorphic to the stabiliser of $e$ interpreted as an edge of $X$ such that $\vert \stab_{\Aut(X)}(e)\vert =1$. 
Moreover, $H$ having exactly two orbits on $V$ translates directly into $\Aut(X)$ having exactly two orbits on $X_2.$ Hence, we conclude that $X$ is an edge-transitive surface with $\fe(X)=(2,1)$.
\end{proof}

\begin{corollary}\label{corollary:constructionfe21}
    If $\Gamma$ be an edge-transitive cubic graph and $H\leq \Aut(\Gamma)$ a ${(2,1)}$-group of $\Gamma$, then $\Gamma$ is the face graph of an edge-transitive surface.
\end{corollary}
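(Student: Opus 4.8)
The plan is to run the $(2,1)$-analogue of the proof of \Cref{corollary:constructionfe22}, now resting on the description of face-edge type $(2,1)$ obtained in the theorem immediately above. First I would unwind the hypothesis: since $H$ is a $(2,1)$-group of $\Gamma=(V,E)$, \Cref{def:ef21group} provides vertices $F_1,F_2,F_3\in V$ with $\{F_1,F_2\},\{F_2,F_3\}\in E$ and an element $\sigma\in H$ with $\sigma(F_1)=F_3$ for which $\C^{(2,1)}(H)=\alpha(\sigma,F_1,F_2,F_3)^{H}$ is a \emph{vertex-faithful} cycle double cover of $\Gamma$. By the observation recorded in \Cref{sec:prelim} --- a cubic graph carrying a vertex-faithful CDC is the face graph of a simplicial surface --- this yields a simplicial surface $X$ with $\F(X)=\Gamma$, $X_2=V$, edge set $X_1$ in bijection with $E$ via $e\mapsto X_2(e)$, and $\{u(v)\mid v\in X_0\}=\C^{(2,1)}(H)$, the umbrellas being read as cycles of $\Gamma$.

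Next I would show that $H$ embeds into $\Aut(X)$. By construction $\C^{(2,1)}(H)$ is an $H$-orbit, hence an $H$-invariant set of cycles of $\Gamma$; therefore every $h\in H\leq\Aut(\Gamma)=\Aut(\F(X))$ permutes the faces of $X$ preserving face-edge incidences, permutes the umbrellas $\{u(v)\mid v\in X_0\}$, and so permutes the vertices of $X$. Since the vertices of $X$ and all their incidences are reconstructed canonically from the pair $(\Gamma,\C^{(2,1)}(H))$, the map $h$ is genuinely an automorphism of $X$; in the notation of \eqref{eq:lambdaX} this reads $H\leq\lambda_X(\Aut(X))$. Because the bijection $X_1\to E$, $e\mapsto X_2(e)$, intertwines the action of $\lambda_X^{-1}(H)$ on $X_1$ with that of $H$ on $E$, and $H$ is transitive on $E$ by condition~(1) of \Cref{def:ef21group}, the subgroup $\lambda_X^{-1}(H)\leq\Aut(X)$ acts transitively on $X_1$. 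Hence $\Aut(X)$ is transitive on $X_1$, i.e.\ $\vert X_1^{\Aut(X)}\vert=1$, so $X$ is an edge-transitive surface with face graph $\Gamma$. Vertex-faithfulness of $\C^{(2,1)}(H)$ additionally forces any two vertices of $X$ to share at most one edge, so the vertex-edge incidence structure of $X$ is simple, as required in the introduction.

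All the bookkeeping above is routine; the one place that deserves care --- and the step I expect to be the main obstacle, slight as it is --- is justifying that an automorphism of the pair $(\Gamma,\C^{(2,1)}(H))$ respects \emph{all} incidences of $X$, in particular the vertex-face ones, and not merely the face-edge incidences visible in $\F(X)$. This is precisely the reconstruction principle invoked above: the vertices of $X$ are the cycles of the chosen vertex-faithful CDC, and a vertex is incident with a face exactly when that face, viewed as an edge of $\Gamma$, lies on the corresponding cycle; once this correspondence is spelled out, the corollary is immediate. The remaining verifications --- that $X_1\to E$ is a bijection and is $\Aut(X)$-equivariant, and that $H$-transitivity on $E$ transfers to $\Aut(X)$-transitivity on $X_1$ --- are straightforward.
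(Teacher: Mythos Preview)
Your argument is correct and is precisely the intended one: the paper states this corollary without proof, as an immediate consequence of the preceding theorem, and your write-up simply unpacks the constructive (``if'') direction of that theorem's proof. The one refinement you make---working with $H\leq\lambda_X(\Aut(X))$ rather than the equality $H=\lambda_X(\Aut(X))$ assumed in the theorem's statement---is exactly right, since the corollary only asserts edge-transitivity of $X$ and not $\fe(X)=(2,1)$; your observation that $H$-invariance of the orbit $\C^{(2,1)}(H)$ forces $H$ to act on $X$ is all that is needed.
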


With Corollary~\ref{corollary:constructionfe21}, we have constructed a simplicial surface $Y:=X^{(2,1)}$ with $\fe(Y)=(2,1)$.
We observe that $Y$ satisfies $(\vert Y_0\vert,\vert Y_1\vert,\vert Y_2\vert)=(28,{16}8,112)$ and $\chi(Y)=-28$.
With GAP we can verify that  
$$\Aut(Y)\cong \Aut(\F(Y))\cong (C_2\times C_2\times C_2 )\rtimes (C_7\rtimes C_3).$$
Note that $Y$ is minimal with the property $\fe(Y)=(2,2)$ with respect to the number of faces.

\section{Notes on implementations}
\label{section:etimplementation}

In this section we describe how we use our theoretical results to compute the census of edge-transitive surfaces with at most $5000$ faces. 
Our implementations and the resulting database of edge-transitive surfaces are available in \cite{edgetransitivesurfacesData}. 
In this paper, the computer algebra systems GAP \cite{GAP4} and Magma \cite{magma} have been exploited to implement the different algorithms to construct and analyse edge-transitive surfaces.
In particular, we make use of the GAP-packages \texttt{simpcomp} \cite{simpcomp}, \texttt{SimplicialSurfaces} \cite{simplicialsurfacegap}, \texttt{GraphSym} \cite{GraphSym} and \texttt{Digraphs} \cite{DeBeule2024aa} to examine and construct edge-transitive surfaces and cubic graphs. We further employ Magma in our studies to speed up the computation of subgroups of the automorphism groups of edge-transitive cubic graphs having prescribed orders.
More precisely, for a given edge-transitive cubic graph $\Gamma=(V,E)$ with corresponding automorphism group $\Aut(\Gamma)$, we apply the following steps to achieve our goal:
\begin{enumerate}
  \item We compute the set $\mathcal{H}$ containing all subgroups $H\leq \Aut(\Gamma)$ that act transitively on the edges of $\Gamma$ and satisfy $\vert H\vert= s \vert V\vert$, where $s\in \{1,2,4\},$
  \item for every $H\in \mathcal{H}$ we compute all edge-transitive surfaces that have $\Gamma$ as a face graph and an automorphism group isomorphic to $H$.
\end{enumerate}
Note that there are exactly $2389$ edge-transitive cubic graphs with at most $5000$ vertices, see \cite{GraphSym,edgetransitiveconder}. We have verified that an edge-transitive cubic graph $\Gamma=(V,E)$ with $\vert V\vert\leq 5000$ satisfies $\vert \Aut(\Gamma)\vert \leq 225792$. Thus, the computation of the set $\mathcal{H}$ is achievable by employing Magma. For each subgroup $H\in \mathcal{H}$, we then use procedures derived from our results in \Cref{section:econstruction} to efficiently compute corresponding edge-transitive surfaces.
The results of our computations are summarised in the following table. In this table, we provide the numbers of different edge-transitive surfaces that are computed with the algorithms described in Section~\ref{section:econstruction}. Here, we denote the set of all edge-transitive surfaces with at most $5000$ faces that have a certain face-edge type $\fe(X),$ where $X$ is an edge-transitive surface, by $M({\fe(X)}).$  
\begin{table}[H]
    \centering
    \begin{tabular}{|*{8}{c|}}
        \hline
        \bm{$\fe(X)$} & $\bm{\vert {M({\fe(X)})}\vert} $ & \textbf{Type 1} & \textbf{Type 2} \\ & & & \vspace{-0.4cm} \\ 
        \hline
  \textbf{(1,4)} & $790$ & $\times$& $\times$ \\
        \textbf{(1,2)} & $1040$ & $958$ & $82$\\
        \textbf{(2,2)}& $119$& $\times$& $\times$ \\
        \textbf{(2,1)}& $236$ & $\times$& $\times$ \\
        \hline
    \end{tabular}
    \caption{Numbers of edge-transitive surfaces with at most $5000$ faces with respect to the different face-edge types}

\end{table}

We write "$\times$" if the type of edge-transitive surfaces described by the corresponding entry is not defined.
Thus, there are exactly $790+958+82+119+236=2185$ edge-transitive surfaces with at most $5000$ faces, up to isomorphism. Out of these edge-transitive surfaces exactly $2002$ are orientable and $183$ are non-orientable. 
In \cite{edgetransitivesurfacesData,Akpanya:1016413}, we give more detailed information on the different numbers of edge-transitive surfaces with respect to their Euler characteristics and their face-edge types. 
Note, in these tables we write $(f,s).i$ if the corresponding column or entry labelled by $(f,s).i$ contains information about edge-transitive surfaces $X$ with face-edge type $(f,s),$ where the groups $\lambda_X(\Aut(X))$ form $(f,s)$-groups of type $i$ of $\F(X).$ Similarly, we make use of the notation $(f,s).$ The following table contains the numbers $n(\fe(X))$ that describe the number of faces of minimal examples of edge-transitive surfaces with face-edge types $\fe(X)$ (with respect to the number of faces).

\begin{table}[h!]
    \centering
    \begin{tabular}{|*{11}{c|}}
        \hline
          \bm{$\fe(X)$}& \textbf{(1,4)} & \textbf{(1,2).1} & \textbf{(1,2).2} &  \textbf{(2,2)}& \textbf{(2,1)} \\
        & & &  &&\vspace{-0.4cm}\\  
        \hline
  $\bm{n({\fe(X)})} $ & $4$& $14$& $144$ & $144$ & $112$\\
        \hline
    \end{tabular}
    \caption{Minimal number of faces of an edge-transitive surface with a certain face-edge type}
   
\end{table}

We have implemented several tests to ensure that the derived census of edge-transitive surfaces contained in \cite{edgetransitivesurfacesData} is correct. In particular, we have conducted the following tests:
\begin{enumerate}
    \item For every edge-transitive cubic graph $\Gamma$ with at most $56$ vertices, we have constructed all edge-transitive surfaces by computing all possible CDCs of $\Gamma$ with the help of the implementations in \cite{simplicialsurfacegap}. We then verified that applying our algorithms to $\Gamma$ produced the same set of edge-transitive surfaces, up to isomorphism.
    \item For every edge-transitive cubic graph $\Gamma=(V,E)$ with $\vert \Aut(\Gamma)\vert\leq 4000$, we computed all subgroups $H\leq \Aut(\Gamma)$ of order $\vert H\vert =s\vert V\vert $, where $s=1,2,4$, using the GAP-function \texttt{ConjugacyClassesSubgroups}, see \cite{GAP4}. We then checked whether the candidate subgroups computed via our algorithm, which is based on computing chains of maximal subgroups is equal to the above set of subgroups, up to conjugacy.
    \item For every edge-transitive cubic graph $\Gamma=(V,E)$ with $\vert V\vert \leq 5000$ we have computed a copy $\Gamma'$ of $\Gamma$ by randomly relabelling the vertices of $\Gamma$ with the labels $1,\ldots,\vert V\vert $. We then checked whether the set of edge-transitive surfaces with $\Gamma$ as face graph is equal to the set of edge-transitive surfaces with $\Gamma'$ as face graph, up to isomorphism.
\end{enumerate}
 We are aware of the fact that errors may still occur despite these efforts of testing.

\subsection*{Acknowledgements}

The author acknowledges funding by the Deutsche Forschungsgemeinschaft (DFG, German Research Foundation) in the framework of the Collaborative Research Centre CRC/TRR 280 ``Design Strategies for Material-Minimized Carbon Reinforced Concrete Structures – Principles of a New Approach to Construction'' (project ID 417002380). Furthermore, the author was
supported by a grant from the Simons Foundation (SFI-MPS-Infrastructure-00008650). This paper builds upon results that first appeared in the author’s PhD dissertation submitted to RWTH Aachen University, see \cite[Section 6]{Akpanya:1016413}. The author thanks Alice C.\ Niemeyer, Daniel Robertz, Jonathan Spreer, and Meike Weiß for their helpful comments on earlier versions of this paper.

\appendix

\section{Examples}

\subsection{Set of faces of the surface \texorpdfstring{$X^{(2,2)}$}{X(2,2)}}
\label{a1}
\begin{align*}
\{&\{{34},{35},{36}\},\{{32},{33},{35}\},\{{31},{32},{34}\},\{{29},{30},{34}\},\{{28},{29},{36}\},\{{26},{27},{35}\},\\
    &\{{25},{26},{36}\},\{{23},{24},{35}\},\{{22},{23},{33}\},\{{20},{21},{32}\},
\{{20},{28},{33}\},\{{18},{19},{34}\},\\
&\{{17},{24},{34}\},\{{15},{16},{32}\},\{{18},{22},{31}\},\{{14},{17},{30}\},\{{15},{25},{31}\},\{{12},{13},{29}\},\\
&\{{12},{27},{30}\},\{{19},{23},{36}\},
\{{17},{19},{35}\},\{{18},{24},{36}\},\{{11},{16},{35}\},\{{10},{20},{29}\},\\
&\{{14},{23},{28}\},\{9,{17},{27}\},\{9,{18},{25}\},\{8,{11},{24}\},\{7,{12},{26}\},\{{11},{15},{26}\},\\
&\{5,6,{23}\},\{5,{21},{24}\},\{{15},{27},{33}\},\{{11},{27},{32}\},\{{16},{26},{33}\},\{{10},{21},{34}\},\\
&\{{10},{30},{32}\},\{{17},{18},{23}\},\{8,{15},{22}\},\{{11},{13},{21}\},
\{{13},{26},{28}\},\{6,{10},{19}\},
\\
&\{7,{10},{16}\},\{5,{14},{20}\},\{4,8,{18}\},\{2,3,{17}\},\{4,9,{15}\},\{4,{16},{19}\},\\&
\{2,{13},{24}\},\{{20},{30},{31}\},
\{{21},{29},{31}\},\{7,{13},{36}\},\{7,{25},{29}\},\{6,{20},{22}\},\\
&\{3,7,{19}\},\{2,9,{12}\},\{1,2,{11}\},\{1,6,{16}\},\{{12},{25},{28}\},\{6,{14},{33}\},\\
&
\{3,{12},{14}\},\{2,{14},{27}\},\{1,5,{13}\},\{1,3,{10}\},\{1,8,{21}\},\{3,9,{30}\},
\\&
\{5,{22},{28}\},\{8,9,{31}\},\{2,5,8\},\{1,4,7\},
\{3,4,6\},\{4,{22},{25}\},\\
&\{{32},{34},{35}\},\{{29},{34},{36}\},\{{26},{35},{36}\},\{{23},{33},{35}\},\{{20},{32},{33}\},\{{18},{31},{34}\},\\
&\{{17},{30},{34}\},\{{15},{31},{32}\},
\{{12},{29},{30}\},\{{23},{28},{36}\},\{{17},{27},{35}\},\{{18},{25},{36}\},\\
&\{{11},{24},{35}\},\{{20},{28},{29}\},\{{12},{26},{27}\},\{{15},{25},{26}\},\{5,{23},{24}\},\{{15},{22},{33}\},\\
&
\{{11},{21},{32}\},\{{26},{28},{33}\},\{{10},{19},{34}\},\{{21},{24},{34}\},\{{10},{16},{32}\},\{{18},{22},{23}\},\\
&
\{5,{20},{21}\},\{4,{18},{19}\},\{2,{17},{24}\},\{4,{15},{16}\},
\{{20},{22},{31}\},\{{14},{20},{30}\},\\
&
\{{25},{29},{31}\},\{{13},{21},{29}\},\{{27},{30},{32}\},\{7,{19},{36}\},\{{16},{19},{35}\},\{{13},{24},{36}\},\\
&
\{7,{10},{29}\},\{{14},{17},{23}\},
\{2,{12},{13}\},\{6,{19},{23}\},\{3,{17},{19}\},\{8,{18},{24}\},
\\
&\{1,{11},{16}\},\{6,{10},{20}\},\{{12},{14},{28}\},\{9,{15},{27}\},\{9,{12},{25}\},\{7,{16},{26}\},\\
&
\{{11},{13},{26}\},\{{14},{27},{33}\},\{6,{16},{33}\},\{9,{17},{18}\},\{8,{11},{15}\},\{3,7,{12}\},\\
&\{1,5,6\},\{2,{11},{27}\},\{1,{10},{21}\},\{3,{10},{30}\},
\{5,8,{22}\},\{5,{13},{28}\},\\
&\{9,{30},{31}\},\{8,{21},{31}\},\{2,5,{14}\},\{1,4,8\},\{1,2,3\},\{3,4,9\},\\
&\{1,7,{13}\},\{4,7,{25}\},
\{4,6,{22}\},\{{22},{25},{28}\},\{2,8,9\},\{3,6,{14}\}\}.
\end{align*}

\subsection{Set of faces of the surface \texorpdfstring{$X^{(2,1)}$}{X(2,1)}}
\begin{align*}
\{& \{ 26, {27}, {28} \}, \{ {24}, {25}, {27} \}, \{ {22}, {23}, {26} \}, \{ {21}, {24}, {26} \}, \{ {19}, {20}, {28} \}, \{ {17}, {18}, {27} \},\\&
\{ {16}, {22}, {28} \}, \{ {14}, {15}, {25} \}, 
  \{ {13}, {18}, {26} \}, \{ {11}, {12}, {24} \}, \{ {11}, {19}, {27} \}, \{ {{10}}, {17}, {25} \}, \\&
  \{ 8, 9, {23} \}, \{ 6, 7, {21} \}, \{ {13}, {17}, {28} \}, \{ 5, {12}, {27} \}, 
  \{ 4, {21}, {22} \}, \{ {14}, {22}, {24} \}, \\&
  \{ {12}, {18}, {23} \}, \{ 3, {11}, {21} \}, \{ 2, 5, {20} \}, \{ 1, 9, {18} \}, \{ 3, 6, {16} \}, \{ 5, {11}, {25} \},\\& \{ 4, {14}, {26} \}, 
  \{ 4, {23}, {24} \}, \{ {10}, {16}, {19} \}, \{ 8, {19}, {22} \}, \{ 6, {17}, {26} \}, \{ 4, {13}, {20} \},\\& \{ 1, {16}, {21} \}, \{ 2, 4, {15} \}, \{ 2, 7, {13} \}, \{ 1, 8, {12} \}, 
  \{ 1, 6, {11} \}, \{ {14}, {21}, {23} \},\\&
  \{ 8, {10}, {28} \}, \{ 6, {13}, {27} \}, \{ {10}, {20}, {22} \}, \{ 3, {10}, {14} \}, \{ 5, {19}, {24} \}, \{ 9, {14}, {17} \},\\
  & \{ 6, {18}, {28} \}, 
  \{ 5, {13}, {15} \}, \{ 7, {11}, {16} \}, \{ 9, {10}, {15} \}, \{ 3, {15}, {17} \}, \{ 4, 5, 7 \},\\& \{ 8, {16}, {20} \}, \{ 3, 9, {25} \}, \{ {12}, {19}, {25} \}, \{ 2, 8, {18} \}, 
  \{ 2, 9, {12} \}, \{ 1, 3, 7 \}, \\&\{ 1, 2, {23} \}, \{ 7, {15}, {20} \}, \{ {24}, {26}, {27} \}, \{ {22}, {26}, {28} \}, \{ {19}, {27}, {28} \}, \{ {17}, {25}, {27} \},\\& \{ {14}, {24}, {25} \}, 
  \{ {18}, {23}, {26} \}, \{ {11}, {21}, {24} \}, \{ 8, {22}, {23} \}, \{ 6, {21}, {26} \}, \{ {13}, {20}, {28} \},\\&
  \{ {12}, {18}, {27} \}, \{ {16}, {21}, {22} \}, \{ 5, {19}, {20} \}, 
  \{ 9, {17}, {18} \}, \{ 6, {16}, {28} \}, \{ 5, {15}, {25} \},\\&
  \{ 4, {13}, {26} \}, \{ {12}, {23}, {24} \}, \{ {11}, {16}, {19} \}, \{ {10}, {17}, {28} \}, \{ 4, {14}, {15} \}, 
  \{ 2, {13}, {18} \},\\& \{ 1, {11}, {12} \}, \{ 6, {11}, {27} \}, \{ {10}, {19}, {25} \}, \{ 9, {14}, {23} \}, \{ 4, 7, {21} \}, \{ 5, {13}, {27} \}, \\&\{ 4, {20}, {22} \}, \{ {10}, {14}, {22} \}, 
  \{ 3, {11}, {25} \}, \{ 8, 9, {10} \}, \{ 6, 7, {13} \}, \{ {13}, {15}, {17} \},\\& \{ 2, 5, {12} \}, \{ {19}, {22}, {24} \}, \{ 3, {14}, {21} \}, \{ 2, 8, {20} \}, \{ 1, 6, {18} \}, 
  \{ 3, {10}, {16} \},\\& \{ {14}, {17}, {26} \}, \{ 4, 5, {24} \}, \{ 8, {12}, {19} \}, \{ 3, 6, {17} \}, \{ 1, {21}, {23} \}, \{ 1, 3, 9 \}, \\&\{ 5, 7, {11} \}, \{ 2, 4, {23} \}, 
  \{ 1, 8, {16} \}, \{ 2, 9, {15} \}, \{ 8, {18}, {28} \}, \{ 7, {16}, {20} \},\\& \{ 1, 2, 7 \}, \{ {10}, {15}, {20} \}, \{ 9, {12}, {25} \}, \{ 3,7, {15} \} \}.
\end{align*}

\bibliographystyle{abbrv}
\bibliography{main.bib}

@article {edgetransitiveconder,
    AUTHOR = {Conder, Marston and Poto{\v c}nik, Primo{\v z}},
     TITLE = {Edge-transitive cubic graphs: analysis, cataloguing and
              enumeration},
   JOURNAL = {J. Algebra},
  FJOURNAL = {Journal of Algebra},
    VOLUME = {685},
      YEAR = {2026},
     PAGES = {703--737},
      ISSN = {0021-8693,1090-266X},
   MRCLASS = {20B25 (05E18 20E06)},
  MRNUMBER = {4948019},
       DOI = {10.1016/j.jalgebra.2025.07.035},
       URL = {https://doi.org/10.1016/j.jalgebra.2025.07.035},
}

@article{cubicvertextransitive,
title = {Cubic vertex-transitive graphs on up to 1280 vertices},
journal = {Journal of Symbolic Computation},
volume = {50},
pages = {465-477},
year = {2013},
issn = {0747-7171},
doi = {https://doi.org/10.1016/j.jsc.2012.09.002},
url = {https://www.sciencedirect.com/science/article/pii/S0747717112001563},
author = {Primo\v{z} Poto\v{c}nik and Pablo Spiga and Gabriel Verret},
keywords = {Cubic, Tetravalent, Valency 3, Valency 4, Vertex-transitive, Arc-transitive}
}

@article {richter,
    AUTHOR = {Richter, R. B. and Seymour, P. D. and {\v S}ir\'a{\v n}, J.},
     TITLE = {Circular embeddings of planar graphs in nonspherical surfaces},
   JOURNAL = {Discrete Math.},
  FJOURNAL = {Discrete Mathematics},
    VOLUME = {126},
      YEAR = {1994},
    NUMBER = {1-3},
     PAGES = {273--280},
      ISSN = {0012-365X,1872-681X},
   MRCLASS = {05C10},
  MRNUMBER = {1264494},
MRREVIEWER = {Carsten\ Thomassen},
       DOI = {10.1016/0012-365X(94)90271-2},
       URL = {https://doi.org/10.1016/0012-365X(94)90271-2},
}

@misc{ GraphSym,   author =           {Evans, R. J. and Montero, A. and Poto{\v c}nik, P.},   title =            {{GraphSym},  Graphs with symmetries library, {V}ersion                       0.1},   month =            {Apr},   year =             {2025},   note =             {GAP package},   howpublished =     {\href            {https://rhysje00.github.io/graphsym}                       {\texttt{https://rhysje00.github.io/}\discretionary                       {}{}{}\texttt{graphsym}}},   keywords =         {cubic;           vertex\texttt{\symbol{45}}transitive;                       edge\texttt{\symbol{45}}transitive;                       arc\texttt{\symbol{45}}transitive; graphs},   printedkey =       {EMP25} }

@article {Whitney,
    AUTHOR = {Whitney, Hassler},
     TITLE = {2-{I}somorphic {G}raphs},
   JOURNAL = {Amer. J. Math.},
  FJOURNAL = {American Journal of Mathematics},
    VOLUME = {55},
      YEAR = {1933},
    NUMBER = {1-4},
     PAGES = {245--254},
      ISSN = {0002-9327,1080-6377},
   MRCLASS = {99-04},
  MRNUMBER = {1506961},
       DOI = {10.2307/2371127},
       URL = {https://doi.org/10.2307/2371127},
}

@article {automorphism,
    AUTHOR = {Akpanya, Reymond and Goertzen, Tom},
     TITLE = {Simplicial surfaces with given automorphism group},
   JOURNAL = {J. Algebraic Combin.},
  FJOURNAL = {Journal of Algebraic Combinatorics. An International Journal},
    VOLUME = {62},
      YEAR = {2025},
    NUMBER = {1},
     PAGES = {Paper No. 19, 35},
      ISSN = {0925-9899,1572-9192},
   MRCLASS = {05E45 (05E18 20B25)},
  MRNUMBER = {4937796},
       DOI = {10.1007/s10801-025-01424-4},
       URL = {https://doi.org/10.1007/s10801-025-01424-4},
}

@article{tutte_embedding,
    AUTHOR = {Tutte, W. T.},
     TITLE = {How to draw a graph},
   JOURNAL = {Proc. London Math. Soc. (3)},
  FJOURNAL = {Proceedings of the London Mathematical Society. Third Series},
    VOLUME = {13},
      YEAR = {1963},
     PAGES = {743--767},
      ISSN = {0024-6115,1460-244X},
   MRCLASS = {55.10},
  MRNUMBER = {158387},
MRREVIEWER = {W.\ Moser},
       DOI = {10.1112/plms/s3-13.1.743}
}

@PHDTHESIS{Grtzen:992149,
      author       = {Görtzen, Tom Frederik},
      othercontributors = {Niemeyer, Alice Catherine and Robertz, Daniel},
      title        = {{C}onstructing simplicial surfaces with given geometric
                      constraints},
      school       = {RWTH Aachen University},
      type         = {Dissertation},
      address      = {Aachen},
      publisher    = {RWTH Aachen University},
      reportid     = {RWTH-2024-08038},
      pages        = {1 Online-Ressource : Illustrationen},
      year         = {2024},
      note         = {Veröffentlicht auf dem Publikationsserver der RWTH Aachen
                      University.},
      cin          = {115320 / 114410 / 110000},
      ddc          = {510},
      cid          = {$I:(DE-82)115320_20140620$ / $I:(DE-82)114410_20140620$ /
                      $I:(DE-82)110000_20140620$},
      pnm          = {DFG project 444414437 - Algebra, Kinematik und
                      Kompatibilität triangulierter Geometrien (A04) (444414437)
                      / DFG project 417002380 - TRR 280: Konstruktionsstrategien
                      für materialminimierte Carbonbetonstrukturen – Grundlagen
                      für eine neue Art zu bauen (417002380)},
      pid          = {G:(GEPRIS)444414437 / G:(GEPRIS)417002380},
      typ          = {PUB:(DE-HGF)11},
      doi          = {10.18154/RWTH-2024-08038},
      url          = {https://publications.rwth-aachen.de/record/992149},
}

@misc{edgetransitivesurfacesData,
  author = {Reymond Akpanya},
  title = {A database of edge-transitive surfaces},
  year = {2025},
  howpublished = {\url{https://github.com/ReymondAkpanya/Edgetransitivesurfaces}}
}

@manual{GAP4,
    organization = "The GAP~Group",
    title        = "{GAP -- Groups, Algorithms, and Programming,
                    Version 4.14.0}",
    year         = 2024,
    url          = "https://www.gap-system.org",
    }

@misc{DeBeule2024aa,
      Author = { Jan De Beule and
                 Julius Jonu{\v s}as and
                 James D. Mitchell and
                 Michael Torpey and
                 Maria Tsalakou and
                 Wilf A. Wilson },
      Title  = { Digraphs - {GAP} package, {V}ersion 1.9.0 },
      Month  = { Sep },
      Year   = { 2024 },
      Url    = { https://digraphs.github.io/Digraphs }
}

@misc {simplicialsurfacegap ,
author={Niemeyer, Alice C. and Baumeister, Markus and Akpanya, Reymond and Goertzen, Tom and Weiß, Meike and Schnelle, Lukas},
title={{SimplicialSurfaces, Version 0.6}},
howpublished = "\url{https://github.com/gap-packages/SimplicialSurfaces}",
year={2025}
}

@article {CONDER2001224,
    AUTHOR = {Conder, Marston and Dobcs\'anyi, Peter},
     TITLE = {Determination of all regular maps of small genus},
   JOURNAL = {J. Combin. Theory Ser. B},
  FJOURNAL = {Journal of Combinatorial Theory. Series B},
    VOLUME = {81},
      YEAR = {2001},
    NUMBER = {2},
     PAGES = {224--242},
      ISSN = {0095-8956,1096-0902},
   MRCLASS = {05C30 (05C10)},
  MRNUMBER = {1814906},
MRREVIEWER = {Bojan\ Mohar},
       DOI = {10.1006/jctb.2000.2008},
       URL = {https://doi.org/10.1006/jctb.2000.2008},
}

@manual{simpcomp,
   author = "Felix Effenberger and Jonathan Spreer",
   title  = "{\tt simpcomp} - a {\tt GAP} toolkit for simplicial complexes,
             {V}ersion 2.1.14",
   year   = "2022",
   url    = "\url{https://github.com/simpcomp-team/simpcomp}",
  }

@book {TopologicalGraphTheory,
    AUTHOR = {Gross, Jonathan L. and Tucker, Thomas W.},
     TITLE = {Topological graph theory},
      NOTE = {Reprint of the 1987 original [Wiley, New York; MR0898434
              (88h:05034)] with a new preface and supplementary
              bibliography},
 PUBLISHER = {Dover Publications, Inc., Mineola, NY},
      YEAR = {2001},
     PAGES = {xvi+361},
      ISBN = {0-486-41741-7},
   MRCLASS = {05C10 (20F65 57M15)},
  MRNUMBER = {1855951},
}

@incollection{JAEGER19851,
title = {A Survey of the Cycle Double Cover Conjecture},
editor = {B.R. Alspach and C.D. Godsil},
series = {North-Holland Mathematics Studies},
publisher = {North-Holland},
volume = {115},
pages = {1-12},
year = {1985},
booktitle = {Annals of Discrete Mathematics (27): Cycles in Graphs},
issn = {0304-0208},
doi = {https://doi.org/10.1016/S0304-0208(08)72993-1},
url = {https://www.sciencedirect.com/science/article/pii/S0304020808729931},
author = {Francois Jaeger},
abstract = {The cycle double cover conjecture asserts that in every bridgeless graph one can find a family C of cycles such that each edge appears in exactly two cycles of C. In a first part of this paper we present the conjecture together with a variety of related problems. In a second part we review four different approaches to the conjecture and present interesting recent results by different authors.}
}

@misc{akpanya2025censusfacetransitivesurfaces,
      title={A census of face-transitive surfaces}, 
      author={Reymond Akpanya and Jonathan Spreer},
      year={2025},
      eprint={2505.00425},
      archivePrefix={arXiv},
      primaryClass={math.CO},
      url={https://arxiv.org/abs/2505.00425}, 
}

@book {GraphsOnSurfaces,
    AUTHOR = {Mohar, Bojan and Thomassen, Carsten},
     TITLE = {Graphs on surfaces},
    SERIES = {Johns Hopkins Studies in the Mathematical Sciences},
 PUBLISHER = {Johns Hopkins University Press, Baltimore, MD},
      YEAR = {2001},
     PAGES = {xii+291},
      ISBN = {0-8018-6689-8},
   MRCLASS = {05C10 (57M15)},
  MRNUMBER = {1844449},
MRREVIEWER = {Arthur\ T.\ White},
}

@incollection {seymour,
    AUTHOR = {Seymour, P. D.},
     TITLE = {Sums of circuits},
 BOOKTITLE = {Graph theory and related topics ({P}roc. {C}onf., {U}niv.
              {W}aterloo, {W}aterloo, {O}nt., 1977)},
     PAGES = {341--355},
 PUBLISHER = {Academic Press, New York-London},
      YEAR = {1979},
      ISBN = {0-12-114350-3},
   MRCLASS = {05C38 (90B10)},
  MRNUMBER = {538060},
MRREVIEWER = {Peter\ Brucker},
}

@article {szekeres,
    AUTHOR = {Szekeres, G.},
     TITLE = {Polyhedral decompositions of cubic graphs},
   JOURNAL = {Bull. Austral. Math. Soc.},
  FJOURNAL = {Bulletin of the Australian Mathematical Society},
    VOLUME = {8},
      YEAR = {1973},
     PAGES = {367--387},
      ISSN = {0004-9727},
   MRCLASS = {05C15},
  MRNUMBER = {325438},
MRREVIEWER = {C.\ St.\ J. A. Nash-Williams},
       DOI = {10.1017/S0004972700042660},
       URL = {https://doi.org/10.1017/S0004972700042660},
}

@article {magma,
    AUTHOR = {Bosma, Wieb and Cannon, John and Playoust, Catherine},
     TITLE = {The {M}agma algebra system. {I}. {T}he user language},
      NOTE = {Computational algebra and number theory (London, 1993)},
   JOURNAL = {J. Symbolic Comput.},
  FJOURNAL = {Journal of Symbolic Computation},
    VOLUME = {24},
      YEAR = {1997},
    NUMBER = {3-4},
     PAGES = {235--265},
      ISSN = {0747-7171},
   MRCLASS = {68Q40},
  MRNUMBER = {MR1484478},
       DOI = {10.1006/jsco.1996.0125},
       URL = {http://dx.doi.org/10.1006/jsco.1996.0125},
}

@article {NPcomplete,
    AUTHOR = {Thomassen, Carsten},
     TITLE = {The genus problem for cubic graphs},
   JOURNAL = {J. Combin. Theory Ser. B},
  FJOURNAL = {Journal of Combinatorial Theory. Series B},
    VOLUME = {69},
      YEAR = {1997},
    NUMBER = {1},
     PAGES = {52--58},
      ISSN = {0095-8956,1096-0902},
   MRCLASS = {05C10 (68Q25)},
  MRNUMBER = {1426750},
MRREVIEWER = {D.\ S.\ Archdeacon},
       DOI = {10.1006/jctb.1996.1721},
       URL = {https://doi.org/10.1006/jctb.1996.1721},
}

@incollection {zhang,
    AUTHOR = {Zhang, Cun-Quan},
     TITLE = {Circuit double covers of graphs},
 BOOKTITLE = {Graph theory---favorite conjectures and open problems. 1},
    SERIES = {Probl. Books in Math.},
     PAGES = {273--291},
 PUBLISHER = {Springer},
      YEAR = {2016},
      ISBN = {978-3-319-31938-4; 978-3-319-31940-7},
   MRCLASS = {05C70},
  MRNUMBER = {3617195},
MRREVIEWER = {Dong\ Ye},
}

@article {MR1133814,
    AUTHOR = {Babai, L\'aszl\'o},
     TITLE = {Vertex-transitive graphs and vertex-transitive maps},
   JOURNAL = {J. Graph Theory},
  FJOURNAL = {Journal of Graph Theory},
    VOLUME = {15},
      YEAR = {1991},
    NUMBER = {6},
     PAGES = {587--627},
      ISSN = {0364-9024,1097-0118},
   MRCLASS = {05C10},
  MRNUMBER = {1133814},
MRREVIEWER = {Carsten\ Thomassen},
       DOI = {10.1002/jgt.3190150605},
       URL = {https://doi.org/10.1002/jgt.3190150605},
}

@PHDTHESIS{Akpanya:1016413,
      author       = {Akpanya, Reymond Oluwaseun},
      othercontributors = {Niemeyer, Alice Catherine and Spreer, Jonathan and Robertz,
                          Daniel},
      title        = {{C}onstructing symmetric simplicial surfaces},
      school       = {RWTH Aachen University},
      type         = {Dissertation},
      address      = {Aachen},
      publisher    = {RWTH Aachen University},
      reportid     = {RWTH-2025-06914},
      pages        = {1 Online-Ressource : Illustrationen},
      year         = {2025},
      note         = {Veröffentlicht auf dem Publikationsserver der RWTH Aachen
                      University; Dissertation, RWTH Aachen University, 2025},
      cin          = {115320 / 110000},
      ddc          = {510},
      cid          = {$I:(DE-82)115320_20140620$ / $I:(DE-82)110000_20140620$},
      pnm          = {DFG project G:(GEPRIS)444414437 - Algebra, Kinematik und
                      Kompatibilität triangulierter Geometrien (A04) (444414437)
                      / DFG project G:(GEPRIS)417002380 - TRR 280:
                      Konstruktionsstrategien für materialminimierte
                      Carbonbetonstrukturen – Grundlagen für eine neue Art zu
                      bauen (417002380)},
      pid          = {G:(GEPRIS)444414437 / G:(GEPRIS)417002380},
      typ          = {PUB:(DE-HGF)11},
      doi          = {10.18154/RWTH-2025-06914},
      url          = {https://publications.rwth-aachen.de/record/1016413},
}
\addcontentsline{toc}{section}{References}

\end{document}